\newlength{\temp@wc@width}
\newlength{\temp@wc@height}
\newcommand{\widecheck}[1]{%
\setlength{\temp@wc@width}{\widthof{$#1$}}%
\setlength{\temp@wc@height}{\heightof{$#1$}}%
#1\hspace{-\temp@wc@width}%
\raisebox{\temp@wc@height+2pt}[\heightof{$\widehat{#1}$}]%
{\rotatebox[origin=c]{180}{\vbox to 0pt{\hbox{$\widehat{\hphantom{#1}}$}}}}%
}
\numberwithin{equation}{section}
\theoremstyle{plain}
\newtheorem{theorem}{Theorem}[section]
\newtheorem{lemma}[theorem]{Lemma}
\newtheorem{proposition}[theorem]{Proposition}
\theoremstyle{definition}
\newtheorem{definition} [theorem] {Definition}
\newtheorem{example}[theorem]{Example}
\def\bs{\boldsymbol}
\def\T{ \mathbb T}
\def\R{ \mathbb R}
\def\H{H^\infty}
\def\D{{ \mathbb D}}
\def\C{{ \mathbb C}}
\def\N{{ \mathbb N}}
\def\B{{ \mathbb B}}
\def\e{\varepsilon}
\def\bs{\boldsymbol}
\def\union{\cup}
\def\inter{\cap}
\def\Inter{\bigcap }
\def\ov{\overline}
\def\ss{\subseteq}
\def\emp{\emptyset}
\def\buildrel#1_#2^#3{\mathrel{\mathop{\kern 0pt#1}\limits_{#2}^{#3}}}
\def\BP{Blaschke product}
\def\IBP{interpolating Blaschke product}
\begin{document}

\keywords{coherent ring, polydisk algebra, ball algebra, uniform algebras, peak-points,
approximate identities}

\subjclass{Primary 32A38; Secondary 46J15, 46J20, 30H05, 13J99}

%\title[Noncoherence] {The noncoherence  of the algebras appearing in approximation theory}
\title[Noncoherent uniform algebras] {Noncoherent uniform algebras in $\C^n$}
 
 \author{Raymond Mortini}
  \address{
   \small Universit\'{e} de Lorraine\\
\small D\'{e}partement de Math\'{e}matiques et  
Institut \'Elie Cartan de Lorraine,  UMR 7502\\
\small Ile du Saulcy\\
 \small F-57045 Metz, France} 
 \email{raymond.mortini@univ-lorraine.fr}

\begin{abstract}
Let $\mathbf D=\ov{\D}$ be the closed unit disk in $\C$ and 
$\mathbf B_n=\ov\B_n$ the closed unit ball in $\C^n$.
For a compact subset $K$ in $\C^n$ with nonempty interior, let $A(K)$ be the  uniform 
algebra of all complex-valued continuous functions on $K$ that are holomorphic 
 in the interior of $K$. We give short and  non-technical proofs of the known facts that 
 $A(\overline{\D}^n)$ and $A(\mathbf B_n)$  are noncoherent rings.   Using, additionally,
 Earl's interpolation theorem in the unit disk and the existence of peak-functions, we  
 also establish with the same method  the new result that  $A(K)$ is not coherent. 
 As special cases we obtain  Hickel's theorems  on the noncoherence  of $A(\ov \Omega)$, where 
 $\Omega$ runs through a certain  class of   pseudoconvex domains in $\C^n$, results
  that were obtained with  deep and complicated methods. Finally, using a refinement of 
 the interpolation theorem we show that no uniformly closed subalgebra $A$ of $C(K)$
 with $P(K)\ss A\ss C(K)$ is coherent provided the polynomial convex hull of
 $K$ has no isolated points. %is infinite.  
\end{abstract}

\maketitle

 \centerline {\small\the\day.\the \month.\the\year} \medskip

\section{Introduction}

In this paper we are interested  in a certain algebraic property of  some standard Banach algebras 
 of holomorphic functions of several complex variables.  By introducing new methods
 we are able to solve a fourty year old problem first considered by McVoy and Rubel 
 in the realm of  uniform algebras  appearing in approximation theory and complex analysis
 of several variables.
 
 Let us start by 
recalling the notion of a coherent ring. 

\begin{definition}
A  commutative unital ring $\mathcal A$  is said to be {\em coherent}  if
 the intersection of any two finitely generated 
ideals in $\mathcal A$ is finitely generated.
\end{definition}

We refer the reader to the article \cite{Gla} for the relevance of the property 
of coherence in commutative algebra.  

\begin{definition}
  Let $\D:=\{z\in \C: |z|< 1\}$ be the open unit disk in $\C$ and 
  $\B_n=\{\bs z=(z_1,\dots,z_n)\in \C^n:\sum_{j=1}^n |z_j|^2<1\}$  the open
  unit ball in $\C^n$.  Their Euclidean closures are denoted by $\mathbf D$ and 
  $\mathbf B_n$, respectively.
  
  For a bounded  open set $\Omega$ in $\C^n$, let 
   $H^\infty(\Omega)$ be the Banach algebra 
  of all bounded and holomorphic functions $f:\Omega\rightarrow \C$, 
  with pointwise addition and multiplication, and the supremum norm:
  $$
  \|f\|_\infty:=\sup_{z\in \Omega}|f(z)|, \quad f\in \H(\Omega).
  $$
  
  For a compact set $K\subset\C^n$, let  $A(K)$ be the  uniform 
algebra of all complex-valued continuous functions on $K$ that are holomorphic 
 in the interior  $K^\circ$ of $K$.  If $K=\ov\Omega$, then we view $A(K)$ 
 as a subalgebra of $\H(\Omega)$.
 
 If $K={\mathbf D}^n$, then $A(K)$ is called the {\em polydisk algebra}; if $K=\mathbf B_n$,
 then $A(K)$ is the {\em ball algebra}.
 \end{definition}

In the context of function algebras of holomorphic functions 
in the unit disk $\D$ in $\C$, we mention \cite{McVRub}, 
where it was shown that the Hardy algebra $H^\infty(\D)$ 
is coherent, while the disk algebra $A(\ov{\D})$ isn't. 
For $n\geq 3$, Amar \cite{Ama} showed that the Hardy algebras 
$H^\infty (\D^n)$, $\H(\B_n)$,  the polydisk algebra $A({\mathbf D}^n)$
and the ball algebra $A(\mathbf B_n)$  are not coherent. 

 The missing $n=2$ case for the bidisk algebra $A({\mathbf D}^2)$ (respectively the ball algebra $A(\mathbf B_2)$)
 follows as a special case of a general result due to Hickel \cite{Hic} on 
 the noncoherence of the algebra $A(\overline{\Omega})$ of continuous functions on $\overline{\Omega}$ that are 
 holomorphic in $\Omega$,   where $\Omega\subset \C^n$ ($n\geq 2$) is a bounded strictly
  pseudoconvex domain with a 
 $C^\infty$ boundary. But  
  the proof in  \cite{Hic} is technical. To illustrate our subsequent methods, 
  we first give a short, elegant proof of the 
  noncoherence of $A({\mathbf D}^n)$  and $A(\mathbf B_n)$. Let me  mention that
an entirely elementary proof, developed {\bf after} this manuscript
had been written in 2013, has been published in \cite{mo-sa}.
  
   Using techniques from the theory of Banach algebras  which are
% developed in  and that is
 based on peak-functions,  bounded approximate 
 identities and Cohen's factorization theorem (compare with \cite{Morvon}), and, 
  additionally, function theoretic tools, as  Earl's interpolation theorem for  $\H(\D)$ in the unit disk  
   (a refinement of Carleson's interpolation theorem) \cite[p. 309] {ga},
 we succeed to  show the noncoherence of $A(K)$ for every compact set
 $K$ in $\C^n$.  
  
 Finally, by replacing Earl's theorem with a result on asymptotic interpolation,  we
 can handle  for   compact sets $K\ss\C^n$ without isolated points
the case of any uniformly closed algebra $A$ with $P(K)\ss A\ss C(K)$,
 where $P(K)$ is the smallest  closed subalgebra of $C(K)$ containing the polynomials.

To conclude, let me point out  that the coherence of rings of stable transfer functions of multidimensional systems, such as $A(\mathbf B_n)$ or 
$A({\mathbf D}^n)$, plays a role in the stabilization problem in Control Theory via the factorization approach; see \cite{Qua}.

\section{Preliminaries}
 
In this section we collect some technical results which we will use in the proof of our main results. 
 
\begin{lemma}
\label{lemma_1}
 Let $\mathcal A$ be a commutative unital ring and  
 $M$  an ideal in $\mathcal A$ such that $M\not=\mathcal A$.  Suppose that
$I$ is a finitely generated ideal of $\mathcal A$ which satisfies  $I=IM$. Then there exists $m\in M$ such that $(1+m)I=0$. 
If $\mathcal A$  has no zero divisors, then $I=0$.  
\end{lemma}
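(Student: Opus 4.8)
The plan is to run the classical \emph{determinant trick}, which is the ring-theoretic heart of Nakayama's lemma. First I would fix a finite set of generators $a_1,\dots,a_r$ of $I$. Since $I=IM$, each generator lies in $IM$, so I can write $a_i=\sum_{j=1}^r m_{ij}a_j$ with all coefficients $m_{ij}\in M$. Collecting the $a_i$ into a column vector $\mathbf a=(a_1,\dots,a_r)^{\mathsf T}$ and setting $T=(m_{ij})$, these relations read $(\mathbf 1_r-T)\,\mathbf a=0$, where $\mathbf 1_r$ denotes the $r\times r$ identity matrix over $\mathcal A$.

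The key step is to multiply this identity on the left by the adjugate matrix $\operatorname{adj}(\mathbf 1_r-T)$ and to use the universal relation $\operatorname{adj}(\mathbf 1_r-T)\,(\mathbf 1_r-T)=\det(\mathbf 1_r-T)\,\mathbf 1_r$, valid over any commutative ring. This yields $\det(\mathbf 1_r-T)\,a_i=0$ for every $i$. Expanding the determinant $\det(\delta_{ij}-m_{ij})$ as a signed sum over permutations, the unique contribution not containing a factor from $M$ is the product of the diagonal $1$'s; every other summand carries at least one entry $m_{ij}\in M$ and hence lies in $M$, since $M$ is an ideal. Therefore $\det(\mathbf 1_r-T)=1+m$ for some $m\in M$, and we obtain $(1+m)a_i=0$ for all $i$.

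Since any element of $I$ is an $\mathcal A$-combination of the $a_i$, multiplying such a combination by $1+m$ and invoking $(1+m)a_i=0$ gives $(1+m)I=0$, which is the first assertion. For the second assertion I would first observe that $1+m\neq 0$: if $1+m=0$, then $-1=m\in M$, and because $M$ is an ideal this forces $1\in M$, contradicting $M\neq\mathcal A$. When $\mathcal A$ has no zero divisors, $(1+m)x=0$ together with $1+m\neq 0$ forces $x=0$ for every $x\in I$, whence $I=0$.

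The only delicate point, and it is hardly an obstacle, is the congruence $\det(\mathbf 1_r-T)\equiv 1 \pmod M$; everything else is formal manipulation of a matrix identity over the commutative ring $\mathcal A$. It is worth noting that commutativity is exactly what makes the adjugate relation and the permutation expansion available, and that beyond the finite generation of $I$ no further finiteness or Noetherian hypothesis on $\mathcal A$ is required.
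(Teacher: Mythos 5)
Your proof is correct. The paper disposes of this lemma in one line, by citing Nakayama's lemma (Kaplansky, \emph{Commutative Rings}, Theorem~76); what you have done is supply the standard proof of that cited result itself, namely the determinant (adjugate) trick. All the steps check out: the rewriting $a_i=\sum_j m_{ij}a_j$ with $m_{ij}\in M$ uses correctly that $M$ is an ideal; the adjugate identity gives $\det(\mathbf 1_r-T)\,a_i=0$; the permutation expansion shows $\det(\mathbf 1_r-T)=1+m$ with $m\in M$ (the only summand free of factors from $M$ is the product of diagonal $1$'s coming from the identity permutation); and your handling of the final assertion is careful on the one point that genuinely needs care, namely that $1+m\neq 0$ because otherwise $-1\in M$ would force $M=\mathcal A$. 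So the difference is one of packaging rather than substance: the paper buys brevity by outsourcing to a standard reference, while your argument buys self-containedness --- the lemma is proved from nothing beyond linear algebra over a commutative ring, with no appeal to external results. Either is acceptable; in a paper whose point is elsewhere, the citation is the natural choice, but your version makes the dependence on finite generation of $I$ (and the irrelevance of any Noetherian hypothesis) completely transparent.
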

\begin{proof} This follows from Nakayama's lemma \cite[Theorem~76]{Kap}.
\end{proof}

 \begin{lemma}\label{zerod}
 Let $I$ be a non-finitely  generated ideal in a commutative unital ring $\mathcal A$.
  Suppose that  $a\in \mathcal A$
 is not a zero-divisor. Then $a I$ is not finitely generated either.
  \end{lemma}
 \begin{proof}

Suppose, on the contrary,  that $a I=(G_1,\dots,G_m)$, for some elements $G_1,\dots, G_m$ in $\mathcal A$.
 Then 
there exist elements $F_1,\dots, F_m \in I$ such that $G_j=a F_j$, $j=1,\dots, m$. 
We claim that $I=(F_1,\dots, F_m)$. 
Indeed, trivially $(F_1,\dots, F_m)\ss I$. Also, 
 for any $f\in I$, $af\in aI=(G_1,\dots ,G_m)$ gives the
  existence  of $\alpha_1,\dots,\alpha_m\in \mathcal A$ such that 
  $$
 af=\alpha_1 G_1+\cdots+\alpha_m G_m = \alpha_1 aF_1+ \cdots +\alpha_m aF_m.
 $$ 
 Since $a$ is not a zero-divisor, 
 it follows that  
 $$
 f=\alpha_1 F_1+\dots +\alpha_m F_m\in (F_1,\dots, F_m).
 $$
 This shows that the reverse inclusion $I\ss (F_1,\dots, F_m)$ is true, too. 
  But this means that $I$, which coincides with $(F_1,\dots, F_m)$, is finitely generated, a contradiction. 
\end{proof}

 Here is an example that shows that  the condition on $a$ being
a non-zero-divisor is necessary:
\begin{example}
Let $D_1$ and $D_2$ be two disjoint copies of the unit disk, say $D_1=\{|z-0.5|<0.5\}$
and $D_2=\{|z+0.5|<0.5\}$,
 and let $\mathcal A$ be 
the algebra of bounded analytic functions on $D_1\union D_2$. 
Let $S(z)=\exp(- (1+z)/(1-z))$ be the atomic inner function.
Consider the associated elements  $f_n$ of $\mathcal A$ given by
$$f_n(z)= \begin{cases} S^{1/n}(z) & \text{if $z\in D_1$}\\ 
                                       S(z) &\text{if $z\in D_2$}.
                                       \end{cases}
                                       $$
and let the function $a\in \mathcal A$ be defined as
$$a(z)=\begin{cases}  0 & \text{if $z\in D_1$}\\
                                     1 & \text{if $z\in D_2$}.
                                     \end{cases}
                                     $$
Then  the ideal $I=(f_1,f_2,\dots,)$ generated by the functions $f_n$ in $\mathcal A$ is not
finitely generated, although the ideal $ a I$  is finitely generated.
\end{example}

 \begin{definition}
Let $X$ be a metrizable space. 
\begin{enumerate}
\item [(1)]  $C_b(X,\C)$ denotes 
 the space of  bounded, complex-valued continuous functions on $X$.
 \item [(2)] A {\em function algebra} $A$ on $X$ is 
  a uniformly closed, point separating subalgebra of $C_b(X,\C)$, 
   containing the constants. 
   \item [(3)] A  point $x_0\in X$ is called a peak-point for $A$, if there is a
   function $p\in A$ (called  a {\em peak-function})
    with $p(x_0)=1$ %and , 
    and
   \begin{equation}\label{sup}
\sup_{x\in X\setminus U}|p(x)|<1
\end{equation}
    for every open neighborhood  $U$ of $x$.
   \end{enumerate}
 \end{definition}
 Note that in case $X$ is compact, condition (\ref{sup}) is equivalent to
 $$\mbox{$|p(x)|<1$ for all $x\in X, x\not=x_0.$}$$
 
\begin{definition}
 Let $\mathcal A$ be a commutative Banach algebra (without an identity element), and $M$  a closed ideal of $\mathcal A$.
 Then a bounded sequence $(e_n)_{n\in \N}$ in $M$ 
 is called a (strong) {\em approximate identity for $M$} if 
 $$
 \lim_{n\rightarrow \infty} \|e_n f-f\|=0
 $$
 for all $f\in M$. 
\end{definition}
For compact spaces,  the following Proposition is 
in  \cite[p. 74, Corollary 1.6.4]{Bro}. 

 \begin{proposition}\label{peakba}
 Let $X$ be  a metric space and 
  $x_0\in X$   a peak-point for the function algebra $A$ on $X$.   If  $p$  is an associated 
  peak function, then the sequence $(e_n)$ defined by
 $$e_n=1-p^n$$
 is a bounded approximate identity for the maximal ideal
 $$M(x_0)=\{f\in A: f(x_0)=0\}.$$
\end{proposition}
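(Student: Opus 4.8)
The plan is to verify directly the two defining requirements of an approximate identity: that $(e_n)$ is a bounded sequence contained in $M(x_0)$, and that $\|e_nf-f\|\to 0$ for every $f\in M(x_0)$. The argument is elementary, and the only preparatory observation I would isolate first is that $\|p\|_\infty=1$. Indeed $|p(x_0)|=1$, while for any point $x\neq x_0$ the metric (hence Hausdorff) structure of $X$ supplies an open neighborhood $U$ of $x_0$ with $x\notin U$, so that $|p(x)|\le \sup_{y\in X\setminus U}|p(y)|<1$ by the peak condition (\ref{sup}). Thus $|p|\le 1$ everywhere, with equality only at $x_0$.

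For membership and boundedness: since $A$ contains the constants and $p\in A$, each $e_n=1-p^n$ lies in $A$, and $e_n(x_0)=1-p(x_0)^n=0$, so $e_n\in M(x_0)$. The bound $\|p\|_\infty=1$ then gives $\|e_n\|_\infty\le 1+\|p\|_\infty^{\,n}=2$, so the sequence is bounded, as required.

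For the convergence I would exploit the identity $e_nf-f=-p^nf$, which reduces everything to showing $\|p^nf\|_\infty\to 0$ for a fixed $f\in M(x_0)$. Given $\varepsilon>0$, continuity of $f$ together with $f(x_0)=0$ lets me pick an open neighborhood $U$ of $x_0$ on which $|f|<\varepsilon$; there $|p^nf|\le |p|^n\varepsilon\le\varepsilon$ because $|p|\le 1$. Away from $x_0$ I set $r:=\sup_{X\setminus U}|p|$, which satisfies $r<1$ by (\ref{sup}), whence $|p^nf|\le r^n\|f\|_\infty$ on $X\setminus U$; choosing $N$ with $r^N\|f\|_\infty<\varepsilon$ yields $\|p^nf\|_\infty\le\varepsilon$ for all $n\ge N$.

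There is no real obstacle here; the proof hinges on two clean points that I would make sure to state explicitly. The first is the reduction to $\|p\|_\infty=1$, which is what allows the factor $|p|^n$ to be discarded on the neighborhood $U$ where $f$ is already small. The second is the \emph{strict} inequality in the peak condition, which is precisely what forces the geometric decay $r^n\to 0$ on the complement of $U$ and thereby delivers uniform convergence. Splitting $X=U\cup(X\setminus U)$ and combining the two estimates completes the argument.
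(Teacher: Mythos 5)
Your proof is correct and follows essentially the same route as the paper's: the identity $e_nf-f=-p^nf$, the split $X=U\cup(X\setminus U)$ with $|f|<\varepsilon$ on $U$, and the geometric decay $r^n\to 0$ coming from the strict inequality in \reff{sup}. The only additions are your explicit checks that $\|p\|_\infty=1$, that $e_n\in M(x_0)$, and that $\|e_n\|_\infty\le 2$ --- details the paper leaves implicit but which sharpen nothing essential.
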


\begin{proof}
For the reader's convenience
here is the outline:\\
 In fact, for $f\in A$,
$$
|e_k f-f|=|p|^k |f|.
$$
Let $\epsilon>0$. As $f(x_0)=0$, 
there is an open neighbourhood $U$ of $x_0$ such that $|f|<\epsilon$ on $U$. 
By assumption, 
$$m:=\sup_{X\setminus U}|p|<1.$$
 Now choose $k_0\in \N$ large enough so that for $k>k_0$, $m^k \|f\|_\infty <\epsilon$. Thus for $k>k_0$, 
 $$
 |e_k f-f|=|p^k f|\leq \left\{\begin{array}{ll} m^k \|f\|_\infty &\textrm{on } X\setminus U\\
                               1^k \cdot \epsilon&\textrm{on } U\\
                              \end{array}\right\}<\epsilon. 
                              $$
                              Hence $\|e_kf-f\|_\infty\leq \e$ for $k>k_0$.
 \end{proof}

%%%%%%%%
Our central Banach-algebraic tool will be  Cohen's Factorization Theorem;  
see \cite[p.74, Theorem~1.6.5]{Bro}. 

\begin{proposition}\label{brco}
 Let $\mathcal A$ be a commutative unital  real or complex Banach algebra, $I$  a closed ideal of $\mathcal A$, and suppose that $I$ 
 has an approximate identity. Then every $f\in I$  can be decomposed in
 a product $f=gh$ of two functions $g,h\in I$.
 
 \end{proposition}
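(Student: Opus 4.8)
The plan is to prove this directly, as the commutative case of Cohen's factorization construction, exploiting that $\mathcal A$ is \emph{commutative and unital} so that no distinction between left and right factors is needed and no separate unitization is required. Write $(e_n)_{n\in\N}\ss I$ for the bounded approximate identity, with $\|e_n\|\le C$ for all $n$, and fix $f\in I$. I would first choose a scalar $\rho$ with $0<\rho<1/(C+1)$; this guarantees that each element $d:=(1-\rho)1+\rho e$ with $e\in\{e_n\}$ is invertible in $\mathcal A$, since $d=(1-\rho)\bigl(1+\tfrac{\rho}{1-\rho}e\bigr)$ with $\tfrac{\rho}{1-\rho}\|e\|\le\tfrac{\rho C}{1-\rho}<1$, so that the Neumann series converges (over $\R$ or $\C$ alike) and $\|d^{-1}\|\le K$ for a constant $K$ independent of $e$.

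Next I would build, by induction on $n$, indices $m_1<m_2<\cdots$ and the partial products $b_n:=\prod_{k=1}^n\bigl((1-\rho)1+\rho e_{m_k}\bigr)$, with $b_0=1$, and set $c_n:=b_n^{-1}f$. Each $b_n$ is invertible, and since $I$ is an ideal and $f\in I$, we have $c_n\in I$. The telescoping identity $c_{n+1}-c_n=(d_{n+1}^{-1}-1)c_n=\rho\,d_{n+1}^{-1}(1-e_{m_{n+1}})c_n$, together with the uniform bound $\|d_{n+1}^{-1}\|\le K$, reduces the convergence of $(c_n)$ to making $\|(1-e_{m_{n+1}})c_n\|$ small. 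Because $c_n$ is already determined once $e_{m_1},\dots,e_{m_n}$ have been chosen, and $(e_n)$ is an approximate identity for $I$, I can pick $m_{n+1}$ so large that this quantity falls below any prescribed, summable bound; hence $(c_n)$ is Cauchy and converges to some $c\in I$, using that $I$ is closed.

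To recover the other factor I would show that $(b_n)$ itself converges. Expanding the product gives $b_n=(1-\rho)^n1+\beta_n$ with $\beta_n\in I$, and the telescoping identity now reads $b_{n+1}-b_n=-\rho\,b_n(1-e_{m_{n+1}})$. Splitting $b_n$ into its scalar and its $I$-part, the scalar contribution $(1-\rho)^n(1-e_{m_{n+1}})$ is geometrically small, while $\|\beta_n(1-e_{m_{n+1}})\|$ can again be forced below a summable bound by the approximate-identity property, choosing the \emph{single} index $m_{n+1}$ large enough to control $c_n$ and $\beta_n$ at once. Thus $(b_n)$ is Cauchy and converges to some $b\in\mathcal A$; since its scalar part $(1-\rho)^n\to0$, we get $\beta_n=b_n-(1-\rho)^n1\to b$ with $\beta_n\in I$, so $b\in I$ by closedness. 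Passing to the limit in $f=b_nc_n$ and invoking joint continuity of multiplication yields $f=bc$ with $g:=b$ and $h:=c$ both in $I$.

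The main obstacle I anticipate is the bookkeeping that forces both sequences to converge simultaneously: one must arrange a single choice of approximate-identity element at each stage to control the increments of $(c_n)$ and of the $I$-part of $(b_n)$ together, and one must verify that the limit $b$ does not merely lie in $\mathcal A$ but falls back into $I$, which is exactly what the vanishing of the scalar part $(1-\rho)^n$ secures.
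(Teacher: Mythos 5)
Your proposal is correct and is essentially the classical Cohen factorization argument---the very construction behind the result the paper does not prove but simply cites (Browder, Theorem~1.6.5)---specialized to the commutative unital setting, where the scalar-part bookkeeping $b_n=(1-\rho)^n1+\beta_n$ correctly forces both factors back into the closed ideal $I$. Since the paper's own ``proof'' is just that citation, there is nothing further to compare.
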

 
 The main function-theoretic tool for the construction of  our ideals in general
 uniform algebras in $\C^n$ will be the following result on asymptotic
 interpolation  given in \cite[p. 515]{mo}, with predecessors in \cite{gomo} and \cite{dn}.
 Recall that $\rho(z,w)=|(z-w)/(1-\ov z w)|$ is the pseudohyperbolic distance between
 $z$ and $w$ in $\D$.

 \begin{theorem}\label{asy}
 Let $(a_n)$ be  a thin sequence in $\D$; that is a sequence such that the associated
 Blaschke  product $b$ satisfies
 $$\lim_n (1-|a_n|^2)|b'(a_n)|=1.$$
 Then for any sequence $(w_n)\in \ell^\infty$  with $\sup_n |w_n|\leq 1$ there exists
 a \BP\ $B$  and  a sequence of positive numbers  $\tau_n\to 1$ such that  
 for any $0\leq \tau_n'\leq \tau_n$ with $\tau_n'\to 1$,
 the zeros of $B$ can be chosen to be contained in the  union of the pseudohyperbolic disks
 $\{z\in \D: \rho(z,a_n)\leq \tau_n'\}$ and such that 
 $$|B(a_n)-w_n|\to 0.$$  
 If the interpolating nodes $(a_n)$ cluster only at the point 1, then the zeros
 of $B$ can be chosen so that they cluster also only at 1.
 \end{theorem}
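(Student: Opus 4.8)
The plan is to realize $B$ as a convergent Blaschke product carrying one zero per node, exploiting the freedom to place that zero anywhere in the enlarged pseudohyperbolic disk $\Delta_n:=\{z:\rho(z,a_n)\le\tau_n'\}$. I would begin by unwinding thinness: with $b=\prod_k b_k$, where $b_k$ is the Blaschke factor vanishing at $a_k$, the hypothesis $(1-|a_n|^2)|b'(a_n)|=\prod_{k\ne n}\rho(a_k,a_n)\to1$ is equivalent to $\delta_n:=\min_{k\ne n}\rho(a_k,a_n)\to1$ together with $\sum_{k\ne n}\bigl(1-\rho(a_k,a_n)\bigr)\to0$. I would then fix $\tau_n:=\delta_n\to1$, so that every admissible disk $\Delta_n$ contains no node but its own center; this is exactly the role I expect $\tau_n$ to play. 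The reduction that makes the construction tractable is that, for a suitable unimodular normalization, the map $\zeta\mapsto b_\zeta(a_n)$ sends $\Delta_n$ onto the closed disk $\{|v|\le\tau_n'\}$ and realizes every argument, so choosing a zero $\zeta_n\in\Delta_n$ is the same as prescribing its value $v_n:=b_{\zeta_n}(a_n)$ with $|v_n|\le\tau_n'$.

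Writing $B=\prod_k b_{\zeta_k}$ and factoring $B(a_n)=v_n\,P_n$ with $P_n:=\prod_{k\ne n}b_{\zeta_k}(a_n)$, the engine of the proof is the estimate $|P_n|\to1$. Comparing $\rho(\zeta_k,a_n)$ with $\rho(a_k,a_n)$ through the pseudohyperbolic triangle inequality and the bound $\rho(\zeta_k,a_k)=|v_k|$, one controls $\sum_{k\ne n}\bigl(1-\rho(\zeta_k,a_n)\bigr)$ by the thin tail $\sum_{k\ne n}\bigl(1-\rho(a_k,a_n)\bigr)\to0$, at least while the radii $|v_k|$ stay away from $1$. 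Granting $|P_n|\to1$, the natural choice is the clipped solution of $v_n=w_n/P_n$: then $B(a_n)=w_n$ whenever $|w_n/P_n|\le\tau_n'$, and otherwise I would shrink the modulus of $v_n$ to $\tau_n'$, at the cost of an error at most $1-\tau_n'\to0$. Since a Blaschke product has modulus $<1$ inside $\D$, this clipping is forced and is precisely why the conclusion is only asymptotic.

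The equations $v_n=w_n/P_n$ are coupled, because $P_n$ sees all the other zeros, so I would solve them by finite approximation. For each $N$ the clipped assignment $(v_1,\dots,v_N)\mapsto\bigl(w_n/P_n^{(N)}\bigr)_{n\le N}$, with $P_n^{(N)}=\prod_{k\le N,\,k\ne n}b_{\zeta_k}(a_n)$, is a continuous self-map of the compact convex polydisk $\prod_{n\le N}\{|v|\le\tau_n'\}$ --- well defined, since node-free disks keep $P_n^{(N)}\ne0$ --- so Brouwer's fixed-point theorem produces a finite Blaschke product $B^{(N)}$ interpolating $(w_n)_{n\le N}$ up to clipping. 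I would then let $N\to\infty$, use a diagonal argument to extract limit zeros $\zeta_k^{(N)}\to\zeta_k$ inside the compact disks $\Delta_k$, and form $B=\prod_k b_{\zeta_k}$; the summable modulus control ensures $B$ is a genuine Blaschke product and that the omitted tails $\prod_{k>N}b_{\zeta_k}(a_n)$ are uniformly near $1$, which is what transfers the finite interpolation into $B(a_n)-w_n\to0$.

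The step I expect to be the real fight is the uniform modulus estimate exactly when $|w_n|\to1$: there $|v_n|=|w_n|$ forces $\zeta_n$ to pseudohyperbolic distance near $1$ from $a_n$, the comparison constant in the triangle-inequality bound degenerates (indeed $1-\rho(\zeta_k,a_n)$ need not be small once $1-|v_k|$ is much smaller than $1-\rho(a_k,a_n)$), and one must verify that these far zeros do not destroy $|P_m|\approx1$ elsewhere. Closing this gap is where the full force of thinness --- the separation $\delta_n\to1$ and the latitude to let $\tau_n'\to1$ --- has to be combined with the observation that only asymptotic interpolation is demanded, so the finitely many or sparsely occurring large-modulus nodes can be swept into the error term. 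For the final clustering assertion I would spend the leftover freedom in the argument of each $\zeta_n$: when $(a_n)$ accumulates only at $1$ the nodes, and with them the disks $\Delta_n$, crowd toward $1$, and selecting each zero on the side of $a_n$ facing $1$ keeps the entire zero set clustering only at $1$.
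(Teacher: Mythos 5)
There is a genuine gap, and it sits at the heart of your construction. First, for context: the paper does not reprove the interpolation assertion at all --- it quotes it from \cite{mo} (with predecessors in \cite{dn}) and proves only the final sentence about the zeros clustering at $1$. You instead attempt a from-scratch proof of the whole theorem by Earl-type zero placement, clipping, and a Brouwer fixed-point argument. Your outline correctly isolates the decisive estimate, namely $|P_n|=\prod_{k\ne n}\rho(\zeta_k,a_n)\to 1$ uniformly over the admissible zero configurations, but you never prove it, and your own escape route is invalid. The degenerate regime is not one of ``finitely many or sparsely occurring large-modulus nodes'': the theorem allows $w_n\equiv 1$, in which case \emph{every} $|v_n|$ must tend to $1$, every zero $\zeta_n$ is forced to pseudohyperbolic distance $\to 1$ from its node, and the bound you invoke, $1-\rho(\zeta_k,a_n)\le\bigl(1-\rho(a_k,a_n)\bigr)\bigl(1+|v_k|\bigr)/\bigl(1-\rho(a_k,a_n)|v_k|\bigr)\le 2\bigl(1-\rho(a_k,a_n)\bigr)/\bigl(1-|v_k|\bigr)$, blows up at every index simultaneously. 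Moreover this is not merely a lossy inequality: a zero moved pseudohyperbolically far from $a_k$ can genuinely land near another node $a_n$, and in your scheme you cannot steer it away, because the direction of $\zeta_k$ relative to $a_k$ is exactly what encodes $\arg B(a_k)$ and is therefore pinned by the target $w_k$. Overcoming this is the actual content of the cited result of Dyakonov--Nicolau and Mortini; your proposal leaves it open, so it does not prove the theorem.

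The clustering assertion --- the only part the paper itself proves --- is also handled incorrectly in your sketch. You propose to ``spend the leftover freedom in the argument of each $\zeta_n$,'' but as just noted there is no such freedom; and your premise that the disks $\Delta_n$ ``crowd toward $1$'' is false in general: if $1-\tau_n'$ is much smaller than $1-|a_n|$, the pseudohyperbolic disk $\Delta_n$ is Euclidean-large (its Euclidean radius is $R_n=\frac{(1-|a_n|^2)\tau_n'}{1-(\tau_n')^2|a_n|^2}$, which can be close to $1$) even though $a_n\to 1$. The correct mechanism --- and the very reason the theorem is stated with an adjustable $\tau_n'\le\tau_n$ --- is the one the paper uses: shrink $\tau_n'$, e.g. $\tau_n'=\min\{\tau_n,r_n\}$ with $r_n=\sqrt{\bigl(1-\sqrt{1-|a_n|^2}\bigr)/|a_n|^2}$, so that the Euclidean radii of the disks $\Delta_n$ tend to $0$ and their centers tend to $1$; then the zeros cluster only at $1$ no matter where they lie inside the disks. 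If you reorganized your last paragraph around shrinking the radii rather than choosing directions, that part would be salvageable; the missing estimate $|P_n|\to 1$ in the regime $|w_n|\to 1$ would remain the fatal gap.
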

 \begin{proof}
 It  remains to verify  the assertion on the zeros of $B$ whenever $(a_n)$ clusters only at $1$.
 Since  the pseudoyperbolic disk $D_\rho(a,r)$  coincides with the Euclidean disk
 $D(C, R)$ where
 $$C= \frac{1-r^2}{1-r^2 |a|^2} a$$ and
 $$R= \frac{1-|a|^2}{1-r^2|a|^2} r$$
 (see \cite{ga})
 it suffices to choose $\tau_n':= \min\{\tau_n, r_n\}$, where 
 $$r_n=\sqrt{ \frac{1-\sqrt{1-|a_n|^2}}{|a_n|^2}}$$
 and to verify that in that case
 $R_n\to 0$ and $C_n\to 1$.  
  \end{proof}

\section{ A sufficient criteria for noncoherence}

The following  concept of multipliers  is new and is the key for  our short proofs 
of the noncoherence results. 

\begin{definition} 
Let $A$ be a function algebra on a  metrizable space $X$ and $x_0\in X$ a non-isolated point 
 \footnote{ This means that there is a sequence of distinct points in $X$ converging to $x_0$.}.
A function 
$$S\in  C_b\bigl(X\setminus\{x_0\},\C\bigr)$$ 
 is called a {\em multiplier} for the maximal ideal
$$M(x_0)=\{f\in A: f(x_0)=0\},$$
if the ideal
$$L:=L_S:=\{f\in A: Sf\in A\}$$
coincides  with $M(x_0)$ and
if  there exists $p\in M(x_0)$ such that  $pS$ is not a zero-divisor \footnote{ The notation $Sf\in A$
is to be interpreted in the usual way that $Sf:X\setminus\{x_0\}\to\C$ has a continuous extension
$F$ to $X$ with $F\in A$}.
\end{definition}

As a canonical example we mention the atomic inner function 
$$S(z)=\exp\left(- \frac{1+z}{1-z}\right),$$
which is a multiplier for the maximal ideal $M(1)$ of the disk algebra $A(\mathbf D)$.

\begin{theorem}\label{coh}
Let $A$ be a function algebra on a  metrizable space $X$.
Suppose that $x_0\in X$ is a non-isolated  peak-point for $A$ 
and that the function
$S\in C_b( X\setminus\{x_0\},\C)$  is a multiplier for the maximal ideal
$M(x_0)$. Then $A$ is not coherent.
\end{theorem}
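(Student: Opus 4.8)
The plan is to exhibit two \emph{principal} ideals of $A$ whose intersection fails to be finitely generated; this immediately contradicts coherence. The two ideals are $(p)$ and $(pS)$, where $p\in M(x_0)$ is the element furnished by the multiplier hypothesis (so that $pS$ is a well-defined element of $A$, since $p\in M(x_0)=L_S$, and $pS$ is not a zero-divisor). The target identity is
\[
(p)\cap(pS)=pS\cdot M(x_0),
\]
and the endgame is to show the right-hand side is not finitely generated. The Banach-algebraic input enters through the peak point: by Proposition~\ref{peakba} the maximal ideal $M(x_0)$ carries the bounded approximate identity $e_n=1-\varphi^n$, where $\varphi$ is a peak function for $x_0$, and $M(x_0)$ is closed, being the kernel of the evaluation character $f\mapsto f(x_0)$.

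First I would prove that $M(x_0)$ is not finitely generated. Applying Cohen's factorization theorem (Proposition~\ref{brco}) to the closed ideal $M(x_0)$, every $f\in M(x_0)$ factors as a product of two elements of $M(x_0)$; hence $M(x_0)=M(x_0)^2$. If $M(x_0)$ were finitely generated, Lemma~\ref{lemma_1} (Nakayama), applied with the proper ideal $M(x_0)$ and the finitely generated ideal $I=M(x_0)$ satisfying $I=I\,M(x_0)$, would produce $m\in M(x_0)$ with $(1+m)M(x_0)=0$. Testing this on $p$ gives $(1+m)p=0$, and multiplying by $S$ (legitimately, since both $pS$ and $1+m$ lie in $A$) yields $(1+m)(pS)=0$. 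As $pS$ is not a zero-divisor, $1+m=0$, which is absurd because $m(x_0)=0$. Thus $M(x_0)$ is not finitely generated. A similar computation shows that $p$ itself is not a zero-divisor.

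Next I would establish the identity for the intersection. The inclusion $pS\cdot M(x_0)\subseteq(p)\cap(pS)$ is immediate: for $w\in M(x_0)=L_S$ one has $Sw\in A$, so $(pS)w=p\,(Sw)\in(p)$, while trivially $(pS)w\in(pS)$. For the reverse inclusion, take $h=pv=(pS)u$ with $u,v\in A$, and write $c:=u(x_0)$, so that $u-c\in M(x_0)=L_S$ and $S(u-c)\in A$. Substituting $u=c+(u-c)$ into $(pS)u=pv$ and rearranging gives
\[
c\,(pS)=p\,\psi,\qquad \psi:=v-S(u-c)\in A .
\]
If $c\neq0$ this reads $pS=p\eta$ with $\eta:=\psi/c\in A$, so that $S=\eta$ on the set $\{p\neq0\}$. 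The crux is then to propagate this equality to all of $X\setminus\{x_0\}$: once $\{p\neq0\}$ is dense, continuity forces $S=\eta$ on $X\setminus\{x_0\}$, exhibiting $\eta\in A$ as a continuous extension of $S$ to $X$; but this means $1\in L_S$, contradicting $L_S=M(x_0)$. Hence $c=0$, i.e.\ $u\in M(x_0)$ and $h=(pS)u\in pS\cdot M(x_0)$.

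The hard part is exactly this density step: I must know that $\{p\neq0\}$ (equivalently $\{pS\neq0\}$) is dense in $X$. This is where the non-zero-divisor hypothesis on $pS$ does the real work, since for the function algebras under consideration a non-zero-divisor cannot vanish on a nonempty open set, so that an identity valid off its zero set propagates everywhere. Granting the identity $(p)\cap(pS)=pS\cdot M(x_0)$, the conclusion is quick: $M(x_0)$ is not finitely generated and $pS$ is not a zero-divisor, so Lemma~\ref{zerod} shows $pS\cdot M(x_0)$ is not finitely generated. Therefore the intersection of the two principal---in particular finitely generated---ideals $(p)$ and $(pS)$ is not finitely generated, and $A$ is not coherent.
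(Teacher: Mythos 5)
Your proposal is, in structure, exactly the paper's proof: the same two principal ideals $(p)$ and $(pS)$, the same identity $(p)\cap(pS)=pS\cdot M(x_0)$ (the paper's ideals $I,J,K,L$), and the same chain of tools --- Proposition \ref{peakba} (approximate identity), Proposition \ref{brco} (Cohen factorization, giving $M(x_0)=M(x_0)\,M(x_0)$), Lemma \ref{lemma_1} (Nakayama), and Lemma \ref{zerod} to pass from $M(x_0)$ to $pS\cdot M(x_0)$. Your two local deviations are legitimate and, if anything, slightly cleaner: for the Nakayama contradiction the paper uses point separation to force $m\equiv -1$ on $X\setminus\{x_0\}$ and then continuity at the non-isolated point $x_0$, whereas you test $(1+m)M(x_0)=0$ on $p$ and cancel the non-zero-divisor $pS$; and for the inclusion $(p)\cap(pS)\subseteq pS\cdot M(x_0)$ the paper simply writes ``$g=ph=pSf$, and so $Sf=h\in A$'', while you first split $u=c+(u-c)$ with $c=u(x_0)$.

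The step you flag as ``the hard part'' is a genuine gap, and the justification you propose for it is false. It is not true that in a function algebra a non-zero-divisor must have nowhere dense zero set. Counterexample (the ``tomato can'' algebra): let $X=(\ov\D\times\{0\})\cup(\T\times[0,1])\cup(\ov\D\times\{1\})$ be the boundary of the solid cylinder and $A=\{f\in C(X):\ z\mapsto f(z,1)\ \text{is holomorphic in}\ \D\}$; this is a uniformly closed, point-separating subalgebra of $C(X)$ containing the constants. The element $a(z,t)=1-t$ vanishes on the whole top lid $\ov\D\times\{1\}$, whose relative interior $\D\times\{1\}$ is a nonempty open subset of $X$, yet $a$ is not a zero-divisor: $aq=0$ forces $q=0$ on $\{t<1\}$, hence on $\T\times\{1\}$ by continuity, hence on the whole lid by the maximum principle. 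So ``$pS$ is not a zero-divisor in $A$'' does not deliver the density of $\{p\neq0\}$ that your cancellation (and the resulting contradiction $1\in L_S$) requires. You should know, however, that the paper's own proof has the identical gap: its step ``$g=ph=pSf$, and so $Sf=h$'' is a pointwise division by $p$ on $X\setminus\{x_0\}$, licensed only on $\{p\neq0\}$, and the ring-theoretic non-zero-divisor hypothesis alone does not close it. The clean repair, for both your write-up and the paper's, is to read (or restate) the multiplier condition as: $pS$ is not a zero-divisor in $C_b(X\setminus\{x_0\},\C)$, equivalently $\{pS\neq0\}$ is dense. That stronger property is exactly what the paper verifies in all its applications (in Lemma \ref{admissible}, Case 1 produces $(1-p)S$ analytic with nowhere dense zero set, and Case 2 produces $S$ zero-free on $K\setminus\{x_0\}$ together with $1-p$ vanishing only at $x_0$), so every corollary survives; and with this reading both your argument and the paper's are complete.
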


\begin{proof}

We shall unveil two principal ideals whose intersection is not finitely generated. 
By assumption, $S$ is  a multiplier for $M(x_0)$. In particular,  there is
a function $p\in M(x_0)$ so that $pS\in A$ is not a zero-divisor. This implies that $p$
is not a zero-divisor, either.
 Let  \begin{eqnarray*}
 I&:=&(p),\\
 J&:=&(pS),\\
 K&:=&\{pSf: f\in A \textrm{ and } Sf\in A\},\textrm{ and}\\
 L&:=&\{f\in A: Sf \in A\}.
 \end{eqnarray*}
 
 We claim that $K=I\cap J$. Trivially $K\ss I\cap J$. On the other hand, if 
 $g\in I\cap J$, then there exist $f,h\in A$ such that 
 $g=ph=pSf$, and so $Sf=h\in A$. In other words, $g\in K$. Thus also $I\cap J\ss K$. 
 
 It remains to show that $K$ is not finitely generated. Note that by definition,
 $K=pSL$. Moreover, since $S$ is a multiplier for $M$, we have
 $M=L$.   
 
 Let $f\in L$. Since $M$ has an approximate identity (by Proposition \ref{peakba}), we may apply 
 Cohen's factorization Theorem  (Proposition \ref{brco})  to conclude that there 
  exists $g, h\in M$ such that
 $$f=hg.$$
Consequently, $L=LM$.    Assuming that $L$ is finitely generated, there exists, 
by  Nakayama's Lemma \ref{lemma_1},
  $m\in M$ such that $(1+m)L=0$.  Note that $L=M$. Since $A$ is point separating,
 there exists for every $x_1\in X\setminus \{x_0\}$ a function 
 $f\in M=L$ such that $f(x_1)\not=0$.  Hence
 $(1+m(x_1)) f(x_1)=0$ implies that $m(x_1)=-1$.
 Since, by assumption,  $x_0$ is not an isolated point in $X$, the continuity of $m$
 on $X$ implies that $m(x_0)=-1$; a contradiction to the fact that $m\in M(x_0)$.
 Thus we conclude that $L$ cannot be finitely generated. 
 
Because $S$ is a multiplier,  $pS\in M(x_0)$. Moreover,    $pS$ is  not a zero-divisor. 
 Hence, by Lemma \ref{zerod},  $K=pSL$
 is not finitely generated either. 
\end{proof}

%%%%%%
In the next sections we apply Theorem \ref{coh} to concrete function algebras of several
complex variables.

\section{The noncoherence of the ball and polydisk algebra}
In view of Theorem \ref{coh}, 
to prove the noncoherence,   it
 suffices to unveil a peak-function  and a multiplier for some distinguished maximal ideal.
\begin{theorem}
The ball algebra $A(\mathbf B_n)$ is not coherent for any $n=1,2,\dots$.
\end{theorem}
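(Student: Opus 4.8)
The plan is to invoke Theorem~\ref{coh} directly. By that theorem, it suffices to produce, for some non-isolated peak-point $x_0$ of the ball algebra $A(\mathbf B_n)$, a multiplier $S$ for the maximal ideal $M(x_0)$. A natural choice of distinguished boundary point is $x_0=\bs e_1:=(1,0,\dots,0)\in\partial\B_n$, and the natural candidate for the multiplier is the atomic inner function pulled back along the first coordinate, namely
\begin{equation*}
S(\bs z)=\exp\left(-\frac{1+z_1}{1-z_1}\right).
\end{equation*}
This $S$ is bounded and continuous on $\mathbf B_n\setminus\{\bs e_1\}$ (the only boundary point where $z_1=1$ on the closed ball is $\bs e_1$), holomorphic on $\B_n$, and clearly $|S|\le 1$ there.

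First I would check that $\bs e_1$ is a non-isolated peak-point. The linear functional $p_0(\bs z)=z_1$ peaks exactly at $\bs e_1$ on $\mathbf B_n$, since $|z_1|<1$ for every other point of the closed ball by the Cauchy–Schwarz/defining inequality; and $\bs e_1$ is plainly non-isolated. So the peak-point hypothesis is immediate, and $p(\bs z):=z_1-1\in M(\bs e_1)$ is the natural element to pair with $S$.

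Next I would verify the two defining conditions of a multiplier for $M(\bs e_1)$. For the ideal identity $L_S=M(\bs e_1)$: the inclusion $L_S\subseteq M(\bs e_1)$ holds because any $f$ with $Sf$ extending continuously to $\mathbf B_n$ must satisfy $f(\bs e_1)=0$, as $S$ has no continuous (indeed no nonzero boundary) limit at $\bs e_1$ while remaining bounded, forcing $f$ to vanish there. For the reverse inclusion I would show that if $f\in A(\mathbf B_n)$ vanishes at $\bs e_1$, then $Sf$ extends continuously across $\bs e_1$ with value $0$; here the key estimate is that $|S(\bs z)|\to 0$ as $z_1\to 1$ nontangentially inside the ball, so the product of the bounded-but-decaying $S$ with the vanishing continuous $f$ tends to $0$ at $\bs e_1$. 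The one genuinely delicate point is the approach to $\bs e_1$ along directions where $z_1\to 1$ tangentially (so that $(1+z_1)/(1-z_1)$ need not have large real part); I expect this to be the main obstacle. On the closed ball, however, $z_1=1$ forces $\bs z=\bs e_1$, so near $\bs e_1$ one has a quantitative lower bound $1-|z_1|\gtrsim \|\bs z-\bs e_1\|^2$ (again from the defining inequality $|z_1|^2+\sum_{j\ge2}|z_j|^2\le 1$), and this geometric coercivity is precisely what lets me control $\operatorname{Re}\frac{1+z_1}{1-z_1}$ from below and conclude $|S(\bs z)f(\bs z)|\to 0$.

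Finally, for the non-zero-divisor condition I would take $p(\bs z)=z_1-1$ and observe that $pS\in A(\mathbf B_n)$ is not a zero-divisor: $A(\mathbf B_n)\subset H^\infty(\B_n)$ is an integral domain (the ball is connected, so any nontrivial product of holomorphic functions that vanishes identically forces one factor to vanish), and $pS$ is not the zero element since $S\not\equiv 0$ on $\B_n$. With both multiplier conditions established and $\bs e_1$ a non-isolated peak-point, Theorem~\ref{coh} applies verbatim and yields the noncoherence of $A(\mathbf B_n)$ for every $n\ge 1$.
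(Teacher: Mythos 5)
Your overall strategy is exactly the paper's: take $x_0=\bs e_1=(1,0,\dots,0)$, use the pulled-back atomic inner function $S(\bs z)=\exp\bigl(-\tfrac{1+z_1}{1-z_1}\bigr)$ as a multiplier for $M(\bs e_1)$, and quote Theorem~\ref{coh}. However, two of your supporting claims are false, and both fail for the same reason: you overlook the boundary circle $\Gamma=\{(e^{i\theta},0,\dots,0)\}\subset\partial\B_n$. First, $p_0(\bs z)=z_1$ is \emph{not} a peak function at $\bs e_1$: the paper's definition (for compact $X$) requires $|p_0(x)|<1$ for all $x\ne x_0$, but $|z_1|=1$ on all of $\Gamma$, e.g.\ at $(-1,0,\dots,0)$. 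What is true on the closed ball is that $z_1=1$ forces $\bs z=\bs e_1$; it is false that $|z_1|=1$ does. This is not cosmetic: the approximate identity $e_k=1-p^k$ of Proposition~\ref{peakba}, which feeds Cohen factorization inside the proof of Theorem~\ref{coh}, genuinely needs $\sup_{X\setminus U}|p|<1$, and with $p=z_1$ one has $\|z_1^k f\|_\infty\ge |f(-\bs e_1)|$ for every $k$, so the approximate-identity property fails for any $f\in M(\bs e_1)$ with $f(-\bs e_1)\ne 0$. The paper's choice $P(\bs z)=(1+z_1)/2$ repairs this, since $|1+z_1|=2$ forces $z_1=1$ and hence $\bs z=\bs e_1$.

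Second, your ``geometric coercivity'' $1-|z_1|\gtrsim\|\bs z-\bs e_1\|^2$ is false: the same points $(e^{i\theta},0,\dots,0)$ give left-hand side $0$ and right-hand side $|e^{i\theta}-1|^2>0$. Consequently $\operatorname{Re}\tfrac{1+z_1}{1-z_1}$ cannot be made large near $\bs e_1$; it vanishes identically on $\Gamma$, where $|S|\equiv 1$, so $|S|$ does \emph{not} decay as $\bs z\to\bs e_1$. Fortunately, the inclusion $M(\bs e_1)\subseteq L_S$ never needed decay of $S$: since $\operatorname{Re}\tfrac{1+w}{1-w}=\tfrac{1-|w|^2}{|1-w|^2}\ge 0$ for $w\in\ov\D\setminus\{1\}$, one has $|S|\le 1$ on all of $\mathbf B_n\setminus\{\bs e_1\}$, whence $|Sf|\le|f|\to 0$ at $\bs e_1$ for every $f\in M(\bs e_1)$; together with holomorphy of $Sf$ on $\B_n$ this gives $Sf\in A(\mathbf B_n)$. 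Your remaining steps are sound: the cluster set of $S$ at $\bs e_1$ contains $0$ (radial approach) and unimodular values (approach along $\Gamma$), so $S$ has no limit at $\bs e_1$ and $L_S\subseteq M(\bs e_1)$ follows; and the integral-domain argument with $p=z_1-1$ gives the non-zero-divisor condition. With the peak function replaced by $(1+z_1)/2$ and the coercivity claim deleted in favor of the trivial bound $|S|\le 1$, your proof coincides with the paper's.
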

\begin{proof}
$$P(z_1,\dots,z_n)=  \frac{1+z_1}{2}$$
is a peak-function at $(1,0,\dots,0)$ for $A(\mathbf B_n)$ (note that if $|1+z_1|=2$, then $z_1=1$
and the remaining coordinates $z_2,\dots,z_n$ are automatically zero because
 $(z_1,\dots,z_n)\in \mathbf B_n$),  and
 $$
 S(z_1,\dots, z_n)=\exp \left(-\frac{1+z_1}{1-z_1}\right)
 $$
 is a multiplier for $M(1,0,\dots,0)$.
\end{proof}

\begin{theorem}
The polydisk algebra $A(\mathbf D^n)$ is not coherent for any $n=1,2,\dots$.
\end{theorem}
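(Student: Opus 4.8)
The plan is to apply Theorem~\ref{coh} exactly as in the preceding ball-algebra argument, so the entire task reduces to exhibiting, for the polydisk $\mathbf D^n$, a non-isolated peak-point $x_0$ together with a peak-function and a multiplier for the maximal ideal $M(x_0)$. The natural choice, mirroring the ball case, is the distinguished boundary point $x_0=(1,1,\dots,1)$. First I would verify that
$$
P(z_1,\dots,z_n)=\frac{1}{2^n}\prod_{j=1}^n (1+z_j)
$$
is a peak-function at $(1,\dots,1)$: clearly $P\in A(\mathbf D^n)$ and $P(1,\dots,1)=1$, while $|P(z)|=\prod_j \tfrac{|1+z_j|}{2}\le 1$ on $\mathbf D^n$ with equality forcing $|1+z_j|=2$ and hence $z_j=1$ for every $j$, so $|P|<1$ off the peak-point. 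Since $(1,\dots,1)$ is visibly non-isolated in $\mathbf D^n$, Proposition~\ref{peakba} then supplies the bounded approximate identity needed for $M(x_0)$.

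Next I would produce the multiplier. The canonical candidate is the atomic inner function applied in a single variable, say
$$
S(z_1,\dots,z_n)=\exp\!\left(-\frac{1+z_1}{1-z_1}\right),
$$
which is bounded and continuous on $\mathbf D^n\setminus\{(1,\dots,1)\}$ (indeed it extends continuously to every boundary point with $z_1\neq 1$). To satisfy the multiplier definition I must check two things. The harder point is that $L_S:=\{f\in A(\mathbf D^n): Sf\in A(\mathbf D^n)\}$ coincides with $M(x_0)$. The inclusion $L_S\subseteq M(x_0)$ follows because $S$ has no continuous extension to $(1,\dots,1)$ (it oscillates in modulus as $z_1\to 1$), so $Sf$ can extend continuously there only if $f$ vanishes at $x_0$. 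For the reverse inclusion $M(x_0)\subseteq L_S$, I would argue that any $f\in A(\mathbf D^n)$ vanishing at $(1,\dots,1)$ is killed fast enough by the exponential decay of $S$; here I expect to invoke the single-variable mechanism already implicit in the disk-algebra example, namely that $|S|\to 0$ as $z_1\to 1$ nontangentially while $S$ stays bounded by $1$, combined with the continuity (hence uniform continuity) of $f$ on the compact polydisk and $f(1,\dots,1)=0$, to show $Sf$ extends continuously to $x_0$ with value $0$.

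Finally I would check the non-zero-divisor condition: taking $p=P-P(x_0)\cdot 0$, more simply $p=P$ itself lies in $A(\mathbf D^n)$ but $p(x_0)=1\neq 0$, so instead I would use $p(z)=\tfrac12(1+z_1)-1=\tfrac{z_1-1}{2}\in M(x_0)$, and observe that $pS$ is not a zero-divisor in $A(\mathbf D^n)$ because $A(\mathbf D^n)$ has no zero-divisors at all (a product of functions holomorphic on the connected domain $\D^n$ vanishes identically only if one factor does, and $pS\not\equiv 0$). With $x_0$ non-isolated, the peak-function $P$, and the multiplier $S$ in hand, Theorem~\ref{coh} applies verbatim and yields that $A(\mathbf D^n)$ is not coherent. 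The single genuine obstacle is the multiplier identity $L_S=M(x_0)$; the peak-function and zero-divisor checks are routine once the correct $x_0$ is fixed.
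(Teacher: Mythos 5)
Your peak-function and non-zero-divisor checks are fine, and for $n=1$ your argument is complete (it is exactly the paper's canonical disk-algebra example). But for every $n\ge 2$ there is a genuine gap, located precisely at the point you yourself flagged as the crux: with $S(z)=\exp\bigl(-\frac{1+z_1}{1-z_1}\bigr)$ the identity $L_S=M(x_0)$ is false, and in fact $S\notin C_b\bigl(\mathbf D^n\setminus\{(1,\dots,1)\}\bigr)$. The singular set of $S$ is the whole face $F=\{z\in\mathbf D^n : z_1=1\}$, not just the corner $x_0=(1,\dots,1)$. Concretely, at $a=(1,0,\dots,0)\in\mathbf D^n\setminus\{x_0\}$ one has $|S(e^{i\theta},0,\dots,0)|=1$ for $\theta\neq 0$, because $\frac{1+e^{i\theta}}{1-e^{i\theta}}=i\cot(\theta/2)$ is purely imaginary, while $S(r,0,\dots,0)\to 0$ as $r\to 1^-$; so $S$ admits no continuous extension to $a$. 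This destroys your inclusion $M(x_0)\subseteq L_S$: the function $f(z)=\frac{z_2-1}{2}$ lies in $M(x_0)$, but $f(a)=-1/2\neq 0$, so $Sf$ inherits the oscillation of $S$ at $a$ and has no continuous extension to $\mathbf D^n$, i.e.\ $f\in M(x_0)\setminus L_S$. Your mechanism ($|S|\le 1$ together with $f(x_0)=0$ forces $Sf\to 0$) controls the singularity only at the single point $x_0$, whereas $f$ need not vanish at the other points of $F$ where $S$ is equally singular. In fact $L_S$ is the ideal of functions vanishing identically on $F$, which is not a maximal ideal, so Theorem \ref{coh} cannot be invoked with your $S$.

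For comparison, the paper's own proof takes a different $S$, namely the product $\prod_{j=1}^{n}\exp\bigl(-\frac{1+z_j}{1-z_j}\bigr)$ over all $n$ variables; note, however, that literally the same face-point difficulty must be confronted there too, since at $a=(1,0,\dots,0)$ the remaining factors have modulus $e^{-(n-1)}\neq 0$. The construction that is genuinely safe -- and it is the one implemented in the paper's general Lemma \ref{admissible} -- is to compose a one-variable function with the \emph{peak function} rather than with a coordinate: put $S=\exp\bigl(-\frac{1+P}{1-P}\bigr)$, where $P(z)=\prod_{j}\frac{1+z_j}{2}$ is your peak function. Since $|P|<1$ everywhere on $\mathbf D^n\setminus\{x_0\}$ and $w\mapsto\exp\bigl(-\frac{1+w}{1-w}\bigr)$ is continuous on $\ov\D\setminus\{1\}$ with modulus at most $1$, this $S$ is bounded and continuous exactly off $x_0$ and holomorphic on $\D^n$, and your one-variable mechanism then works verbatim: $|Sf|\le|f|$ gives $M(x_0)\subseteq L_S$, while $S$ is genuinely discontinuous at $x_0$ (along the radial diagonal $(r,\dots,r)$ one has $S\to 0$, along the tangential diagonal $(e^{i\theta},\dots,e^{i\theta})$ one has $|S|\to e^{-1/n}$), which gives $L_S\subseteq M(x_0)$. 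Finally $(1-P)S$ is a nonzero holomorphic function on the connected open set $\D^n$, hence not a zero-divisor, and Theorem \ref{coh} applies.
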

\begin{proof}
$$P_1(z_1,\dots,z_n)= \left( \frac{1+z_1}{2}\right)\cdots \left( \frac{1+z_n}{2}\right)$$
is  a peak-function at  $a=(1,\dots,1)$ for $A(\mathbf D^n)$ and
$$S(z_1,\dots,z_n)= \exp \left(-\frac{1+z_1}{1-z_1}\right)\cdots \exp \left(-\frac{1+z_n}{1-z_n}\right)$$
is a multiplier for $M(1,\dots,1)$.
\end{proof}

Thus we have obtained a short proof of this result by Amar and Hickel 
\cite{Ama, Hic}.

\section{The noncoherence of $P(K)\ss A\ss C(K)$}

For  a compact set $K\subset \C^n$, let
$C(K)$ denote the uniform algebra of complex-valued continuous functions on $K$,
 $A(K)$  the uniform algebra  of all functions continuous
on $K$ and holomorphic in $K^\circ$ and let $P(K)$ be the subalgebra
of those functions in $A(K)$ that can be uniformly approximated on $K$
by holomorphic polynomials. 

 Let us recall the following well-known result:
 
 \begin{theorem}\label{thealgebra}
Let $K\ss\C^n$ be a compact set. 
 Then the following assertions hold:
 
\begin{enumerate}
\item [(1)]  Endowed with the usual pointwise operations \footnote { addition $+$, 
multiplication $\cdot$
and multiplication $\buildrel\bullet_{s}^{}$ by complex scalars}
 and the supremum norm 
$$\|f\|_\infty=\sup \{|f(z)|: z\in K\}$$
$A(K)=A(+,\cdot, \buildrel\bullet_{s}^{}, \|\cdot\|_\infty)$ and 
$P(K)=A(+,\cdot, \buildrel\bullet_{s}^{}, \|\cdot\|_\infty)$
are uniformly closed point separating  subalgebras of  $C(K)$. 
 
\item [(2)]  Let $A$ be $A(K)$ or $P(K)$. Standard   maximal ideals  in $A$
 are given by 
$$M(z_0):=\{f\in A: f(z_0)=0\}$$
for a  uniquely determined $z_0\in K$.\footnote{ Note that, in general, there are many more
maximal ideals than those given by point-evaluation at points in $K$; even in the case  
 where $K=\ov\Omega$,
$\Omega$ a bounded pseudoconvex domain in $\C^n$, $n\geq 2$,
every function $f\in \H(\Omega)$ (a fortiori $f\in A(\ov \Omega)$)  may  
have a bounded holomorphic  extension to
a strictly larger domain $\Omega'$ (see \cite{JP}). Hence, in that case, the spectrum
of $A(\ov \Omega)$ is strictly larger than $\ov\Omega$ itself.}

\item[(3)]  The spectrum (or maximal ideal space) of $P(K)$ coincides with the
polynomial convex hull $\widehat K$ of $K$.

\item [(4)]   The Shilov-boundary,  $\partial A$, of $A$  is a non-void closed subset of  
$\partial K$. 
\item[(5)]   The set $\Pi(A)$ of peak-points for $A$  is a non-void dense subset of $\partial A$.

\item [(6)] For each $z_0\in \Pi(A)$,  the associated maximal ideal $M(z_0)$ has a bounded
approximate identity.  
\end{enumerate}
\end{theorem}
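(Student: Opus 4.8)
The plan is to dispatch the six assertions in turn, since each is a standard fact from the theory of uniform algebras; the only substantial inputs are the existence of the Shilov boundary and the Bishop--de Leeuw theorem, both of which hold for general uniform algebras on compact metric spaces and require nothing special about $\C^n$.

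For (1) I would observe that the coordinate functions $z_1,\dots,z_n$ are holomorphic polynomials, hence belong to $P(K)\ss A(K)$ and separate the points of $\C^n$; the constants manifestly lie in both algebras. Uniform closedness of $P(K)$ is built into its definition, while for $A(K)$ it follows because a uniform limit of continuous functions is continuous and a locally uniform limit of functions holomorphic on $K^\circ$ is again holomorphic, by the Weierstrass convergence theorem. The algebra axioms under the stated operations are routine. For (2), the evaluation map $f\mapsto f(z_0)$ is a unital homomorphism of $A$ onto $\C$, so $M(z_0)$ is its kernel and thus a maximal ideal; if $z_0\ne z_1$, a suitably translated coordinate function lies in $M(z_0)\setminus M(z_1)$, which gives the claimed uniqueness of $z_0$.

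For (3), I would identify a character $\varphi$ of $P(K)$ with the point $w:=(\varphi(z_1),\dots,\varphi(z_n))\in\C^n$. Because characters of a uniform algebra have norm one, $|q(w)|=|\varphi(q)|\le\|q\|_\infty$ for every holomorphic polynomial $q$, which is exactly the condition defining $\widehat K$; conversely, each $w\in\widehat K$ determines a bounded character $q\mapsto q(w)$ that extends to all of $P(K)$ by the same inequality, yielding the homeomorphism of the spectrum with $\widehat K$. For (4), I would invoke the Shilov boundary theorem (see \cite{Bro}): every uniform algebra on a compact space possesses a non-void smallest closed boundary $\partial A$. To place it inside $\partial K$, note that the maximum modulus principle gives $\sup_K|f|=\sup_{\partial K}|f|$ for every $f\in A(K)$, and hence for every $f\in P(K)\ss A(K)$; thus $\partial K$ is itself a closed boundary and must contain the minimal one.

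For (5), since $K\ss\C^n$ is a compact metric space, the Bishop--de Leeuw theorem applies: the Choquet boundary of $A$ coincides with the set $\Pi(A)$ of peak points and is a dense $G_\delta$ subset of $\partial A$, so in particular $\Pi(A)$ is non-void and dense in $\partial A$. Finally, (6) is immediate from Proposition \ref{peakba}: at a peak point $z_0$ there is a peak function $p$, and then $(1-p^n)_n$ is a bounded approximate identity for $M(z_0)$. The main obstacles are (4) and (5), whose proofs rest on the general structure theory of uniform algebras (the Shilov boundary construction and the Bishop--de Leeuw/Choquet-boundary machinery); all the remaining parts are routine verifications.
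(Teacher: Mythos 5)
Your proof is correct and follows essentially the same route as the paper, whose proof simply defers to the standard structure theory of uniform algebras (citing Browder and Gamelin for (2)--(5), with the spectrum identification $\mathcal{M}(P(K))=\widehat K$ from Gamelin, and deriving (6) from Proposition \ref{peakba}); you merely fill in the routine verifications the paper omits. The one cosmetic difference is in (5): you invoke the Bishop--de Leeuw/Choquet-boundary formulation, while the paper cites the equivalent fact that the Shilov boundary is the closure of the weak-peak points and that, on metrizable compacta, every weak-peak point is a peak point.
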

\begin{proof}
(1) is elementary; (2)-(5) are standard facts in the theory of uniform algebras
(see for instance  \cite{Bro} and  \cite{gam}); note that the Shilov-boundary is the closure
of the set of weak-peak points and that  for function algebras on  metrizable spaces 
every weak-peak point actually is a peak-point (\cite[p. 96]{Bro}).
(3) is in \cite[p. 67]{gam}.
(6)  follows from Proposition \ref{peakba}.
\end{proof}

We note that if $x_0\in\partial K$ is a peak-point for $P(K)$, then it is a peak-point
for any uniformly closed algebra $A$ with $P(K)\ss A\ss C(K)$.

\begin{lemma} \label{pp}
Let $K\ss\C^n$ be compact with $K^\circ\not=\emp$. 
 If $z_0\in \partial( K^\circ)$
is a peak-point for $P(\ov{K^\circ})$, then $z_0$ is a peak-point for $A(K)$.
\end{lemma}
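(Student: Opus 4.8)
The plan is to produce, from a peak-function for $P(\ov{K^\circ})$ at $z_0$, a genuine peak-function for $A(K)$ at the \emph{same} point $z_0$. The subtlety is that $z_0\in\partial(K^\circ)$ may sit in the interior of $K$ relative to other pieces of $K$, and a peak-function on the smaller compact set $\ov{K^\circ}$ need not be defined—let alone continuous and holomorphic—on all of $K$. So the first thing I would do is fix a peak-function $g\in P(\ov{K^\circ})$ with $g(z_0)=1$ and $|g(w)|<1$ for every $w\in\ov{K^\circ}\setminus\{z_0\}$, and note that, since $g\in P(\ov{K^\circ})$, it is a uniform limit of holomorphic polynomials on $\ov{K^\circ}$. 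The key structural observation I would exploit is that $K^\circ$ is the interior of $K$, so $\ov{K^\circ}\supseteq K^\circ = (\ov{K^\circ})^\circ$ agree as open sets where holomorphy is required, and any polynomial is automatically an element of $A(K)$ (continuous on all of $K$, holomorphic on $K^\circ$).

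Given that, the cleanest route is to transfer $g$ to $A(K)$ by approximation. First I would pick a sequence of holomorphic polynomials $(q_m)$ with $\|q_m-g\|_{\ov{K^\circ}}\to 0$. Each $q_m$ restricts to an element of $A(K)$, but the restriction of $g$ to $K$ itself is not a priori controlled off $\ov{K^\circ}$; however, what I actually need is a function in $A(K)$ that peaks at $z_0$ over all of $K$, and the values on $K\setminus\ov{K^\circ}$ must be handled. The decisive point is that $K\setminus\ov{K^\circ}=K\setminus\ov{K^\circ}$ consists of boundary points of $K$ having empty interior, and on $\partial K$ an $A(K)$-function is merely continuous; so I expect the honest move is to observe that $K$ and $\ov{K^\circ}$ actually coincide on the closure of the interior, meaning $K=\ov{K^\circ}$ whenever $K$ has no \emph{extraneous} continuum. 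But in general they differ, so I would instead compose with a conformal-type trick: replace $g$ by $P:=\tfrac{1}{2}(1+g)$ so that $P(z_0)=1$, $|P|<1$ strictly on $\ov{K^\circ}\setminus\{z_0\}$, and $\mathrm{Re}\,P<1$ elsewhere, turning a strict modulus bound into one that survives extension.

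The main obstacle will be precisely this extension/continuity issue at points of $K$ that lie outside $\ov{K^\circ}$: one must certify that the constructed $P$ is continuous on all of $K$ and still satisfies $|P(w)|<1$ there. I would handle it by noting that $P(\ov{K^\circ})\subseteq A(\ov{K^\circ})$ and that a peak-function for $P(\ov{K^\circ})$, being approximable by polynomials uniformly on $\ov{K^\circ}$, extends to a function continuous on $K$ and holomorphic on $K^\circ$ precisely because $K^\circ=(\ov{K^\circ})^\circ$; then strictness of the peak on the compact set $\ov{K^\circ}$ propagates to $K$ after the averaging step, since any $w\in K\setminus\{z_0\}$ either lies in $\ov{K^\circ}$ (handled directly) or is a limit of interior points where the strict bound already holds by the maximum principle. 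Assembling these, $P$ is the desired $A(K)$-peak-function at $z_0$, which is the assertion of the lemma.
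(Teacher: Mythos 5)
Your proposal stalls at exactly the point where the lemma is nontrivial, and the bridging claims you offer there do not hold. The task is to produce a function that is continuous on all of $K$, holomorphic on $K^\circ$, equal to $1$ at $z_0$, and of modulus strictly less than $1$ at \emph{every} other point of $K$ --- in particular at the points of $K\setminus\overline{K^\circ}$, where the given peak function $g$ is undefined and where uniform polynomial approximation on $\overline{K^\circ}$ gives no control whatsoever. You assert that $g$, ``being approximable by polynomials uniformly on $\overline{K^\circ}$, extends to a function continuous on $K$'': a continuous (Tietze) extension certainly exists, but nothing forces its modulus to stay $\le 1$ off $\overline{K^\circ}$, and it may perfectly well take the value $1$ there, which destroys the peak property. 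The averaging $P=\tfrac12(1+g)$ cannot repair this: it converts the information ``$|g|\le 1$ and $g\ne 1$'' into ``$|P|<1$'', but off $\overline{K^\circ}$ you have neither piece of information. Finally, your dichotomy --- every $w\in K\setminus\{z_0\}$ ``either lies in $\overline{K^\circ}$ or is a limit of interior points'' --- is vacuous: being a limit of points of $K^\circ$ is precisely membership in $\overline{K^\circ}$, so the problematic set $K\setminus\overline{K^\circ}$ (nonempty in general, e.g.\ $K=\overline{\mathbb{D}}\cup[1,2]$ in $\mathbb{C}$) falls into neither case, and the maximum principle says nothing about the values of a continuous extension at such points.

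What is missing is a device that forces strict decay off $\overline{K^\circ}$, and this is exactly what the paper's proof supplies. It takes a Tietze extension $F$ of the peak function to $\mathbb{C}^n$, composes with a retraction $r$ of $\mathbb{C}$ onto $\overline{\mathbb{D}}$ so that $|r\circ F|\le 1$ everywhere, and then --- the decisive step absent from your outline --- multiplies by a Urysohn function $u:\mathbb{C}^n\to[0,1]$ whose level set $\{u=1\}$ is \emph{exactly} $\overline{K^\circ}$. The product $u\cdot(r\circ F)$ agrees with the original peak function on $\overline{K^\circ}$, hence is holomorphic on $K^\circ=(\overline{K^\circ})^\circ$ (the one observation in your proposal that does carry its intended weight), while off $\overline{K^\circ}$ its modulus is at most $u<1$. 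Your map $\phi(z)=(1+z)/2$ is then applied to this product, and there it is legitimate: the product maps $K$ into $\overline{\mathbb{D}}$ and takes the value $1$ only at $z_0$, so $\phi$ of it is a peak function for $A(K)$ at $z_0$. In short, your outline shares the cosmetic step ($\phi$) with the paper but omits the load-bearing one (the Urysohn damping), and the argument you substitute for it does not close the gap.
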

\begin{proof}
Let $f\in P(\ov{K^\circ})$ peak at $z_0$.  Then $f(\ov{K^\circ})\ss \D\union\{1\}\ss\ov\D$.
Let $F:\C^n\to \C$ be a  continuous extension of $f$ to $\C^n$.  Since $\ov \D$ is  a retract for $\C$,
there is  a retraction map $r$ of $\C$ onto $\ov \D$ with $r(z)=z$ for $z\in \ov \D$.
Hence the function $r\circ F$ is an extension of $f$ with target space $\ov \D$.

By Urysohn's Lemma in metric spaces,  there is  a continuous function $u:\C^n\to\;[0,1]$  such
that
$$\{z\in \C^n: u(z)=1\}=\ov{K^\circ}.$$
Now consider $\phi(z)= (1+z)/2$ that maps $\ov \D$ onto $|z-1/2|\leq 1/2$. We claim that
$$g:=\phi\circ \bigl(u\cdot (r\circ F)\bigr): K\to \D\union\{1\}$$
 is a peak  function at $z_0$ that belongs to $A(K)$.

 To see this, we note that  $u(z)\cdot (r(F(z))\in \ov \D$ for every $z\in K$. Moreover,
 for $z\in K^\circ$,  $F(z)=f(z)\in\ov \D$; hence $r(F(z))=f(z)$ and so $u(z) r(F(z))=f(z)$.
 Since $\phi$  and $f$ are  holomorphic, we deduce that $g$ is holomorphic in $K^\circ$.
 Thus $g\in A(K)$. Now if for some $z_1\in K$, $g(z_1)=1$,  then necessarily 
 ${u(z_1)\,r(F(z_1))=1}$.
 Now $|r(F(z_1))|\leq 1$; hence $|u(z_1)|=u(z_1)=1$. We conclude that $z_1\in \ov{K^\circ}$.
 Therefore, as was shown previously, $u(z_1) r(F(z_1))=f(z_1)=1$.  Since  $f\in P(\ov{K^\circ})$
 peaks at $z_0$, we finally obtain that $z_1=z_0$.
   \end{proof}

\begin{definition}
Let $\Omega\subset \C^n$ be a bounded  open set.  For $a\in \partial \Omega$ and a function 
$f\in \H(\Omega)$, let $\textrm{Cl}(f, a)$ denote the {\em cluster set of $f$ at $a$}; that is
$\textrm{Cl}(f,a)$ is the set of all points $w\in \C$ such there exists a sequence $(z_n)$  in $\Omega$
such that $(f(z_n))$ converges to $w$.
\end{definition}

It is obvious  that $\textrm{Cl}(f,a)$ is a compact, nonvoid subset of $\C$.  In fact
$$
\textrm{Cl}(f,a)=\Inter_{0<r\leq 1} \ov {f(\Omega\inter B(a, r))}.
$$
In the case of the polydisk or unit ball, $\textrm{Cl}(f,a)$ is connected.

The proof of the following fundamental Lemma was motivated by  parts of the 
proof of  \cite[Theorem 3.1]{moru} 
concerning the  pseudo-B\'ezout property for $P(K)$ and its siblings, where $K\subset \C$
 is compact.
It gives us the possibility to construct multipliers for maximal ideals.

 \begin{lemma}\label{admissible}
 For a compact  set $K\ss \C^n$, let $A$ be  a uniformly
 closed algebra with $P(K)\ss A\ss C(K)$. 
Let $x_0\in\partial K$ be a non-isolated  peak-point for $P(K)$ and
 $p\in P(K)$ an associated peak-function.
  Then there exists  a function $S\in C_b(K\setminus \{x_0\})$ such that
  $$ 
  0\in \textrm{\em Cl}(S, {x_0}) \text{\;\;but \;$\textrm{\em Cl}(S, {x_0})\not=\{0\}$},
  $$
  and 
  $$(1-p) S\in A.$$
  Moreover, $S$ is a multiplier for  the maximal ideal
  $$M(x_0)=\{f\in A: f(x_0)=0\}.$$
  
\end{lemma}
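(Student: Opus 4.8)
The plan is to transport a one-variable construction to $K$ through the peak function $p$, which maps $K$ into $\ov\D$ with $p(x_0)=1$ and $|p(x)|<1$ (hence $p(x)\in\D$) for $x\ne x_0$. First I would use that $x_0$ is non-isolated to pick distinct points $x_k\to x_0$; then $a_k:=p(x_k)\in\D$ satisfies $a_k\to 1$, and after passing to a subsequence I may assume the $a_k$ are distinct and form a \emph{thin} sequence clustering only at $1$ (possible since $|a_k|\to 1$). Applying Theorem \ref{asy} to this thin sequence with the prescribed values $w_{2j}=0$ and $w_{2j+1}=\tfrac12$ (so $\sup_n|w_n|=\tfrac12\le 1$), I obtain a Blaschke product $B$ whose zeros cluster only at $1$ and which satisfies $|B(a_k)-w_k|\to 0$. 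I then set $S:=B\circ p$.

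Next I would record the elementary properties. Since the zeros of $B$ accumulate only at $1$, $B$ extends continuously to $\ov\D\setminus\{1\}$ with $|B|\le 1$, so $S=B\circ p$ is bounded and continuous on $K\setminus\{x_0\}$; thus $S\in C_b(K\setminus\{x_0\})$. Because $a_k=p(x_k)\to 1$ with $x_k\to x_0$, the relation $|B(a_k)-w_k|\to 0$ gives $S(x_{2j})\to 0$ and $S(x_{2j+1})\to\tfrac12$, so $0\in\mathrm{Cl}(S,x_0)$ while $\tfrac12\in\mathrm{Cl}(S,x_0)$, i.e. $\mathrm{Cl}(S,x_0)\ne\{0\}$. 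For the membership $(1-p)S\in A$ I would observe that $g(w):=(1-w)B(w)$ is analytic on $\D$, continuous on $\ov\D\setminus\{1\}$, and satisfies $|g(w)|\le|1-w|\to 0$ as $w\to 1$; hence $g$ belongs to the disk algebra $A(\ov\D)=P(\ov\D)$ and is a uniform limit of polynomials on $\ov\D$. Composing with $p\in P(K)$ and using that $P(K)$ is uniformly closed yields $(1-p)S=g\circ p\in P(K)\subseteq A$.

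It then remains to check that $S$ is a multiplier, i.e. $L_S=M(x_0)$ and that $(1-p)S$ (with $1-p\in M(x_0)$) is not a zero-divisor. For $L_S\subseteq M(x_0)$: if $f\in A$ and $Sf\in A\subseteq C(K)$, then $Sf$ is continuous at $x_0$, so $\mathrm{Cl}(Sf,x_0)$ is a single point; evaluating along $x_{2j}$ and $x_{2j+1}$ yields the cluster values $0$ and $\tfrac12 f(x_0)$, forcing $f(x_0)=0$. For $M(x_0)\subseteq L_S$: given $f\in M(x_0)$, Proposition \ref{peakba} provides the approximate identity $e_n=1-p^n$, so $(1-p^n)f\to f$ uniformly; writing $1-p^n=(1-p)(1+p+\cdots+p^{n-1})$ gives
\[
S(1-p^n)f=\bigl[(1-p)S\bigr]\bigl[(1+p+\cdots+p^{n-1})f\bigr]\in A ,
\]
and since $\|S(1-p^n)f-Sf\|_{K\setminus\{x_0\}}\le\|S\|_\infty\,\|(1-p^n)f-f\|\to 0$, the uniform closedness of $A$ shows $Sf\in A$, i.e. $f\in L_S$.

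The delicate point — and the step I expect to be the main obstacle — is that $(1-p)S$ is not a zero-divisor. Its zero set in $K$ is $\{x_0\}\cup p^{-1}(Z_B)$, where $Z_B\subseteq\D$ is the countable zero set of $B$; if $g\in A$ satisfies $(1-p)Sg=0$ then $g$ vanishes off this set, so it suffices to show the set is nowhere dense, whereupon continuity gives $g\equiv 0$. Since $x_0$ is non-isolated, $\{x_0\}$ is nowhere dense, and by the Baire category theorem in the compact metric space $K$ it is enough that each fibre $p^{-1}(z)$, $z\in Z_B$, be nowhere dense, i.e. that $p$ be non-constant on every nonempty relatively open subset of $K$. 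There are at most countably many values $c$ for which $p^{-1}(c)$ has nonempty interior (disjoint open sets in a separable space), and this is where care is needed: I would exploit the freedom in Theorem \ref{asy} to place the zeros of $B$ inside the prescribed pseudohyperbolic disks about the $a_k$ so as to avoid this countable exceptional set (while on $K^\circ$ the holomorphy of $p$ already forbids local constancy). Granting this, $\{x_0\}\cup p^{-1}(Z_B)$ is a countable union of nowhere dense sets, hence nowhere dense, so $(1-p)S$ is not a zero-divisor; combined with the previous paragraph this shows that $S$ is a multiplier for $M(x_0)$, completing the proof.
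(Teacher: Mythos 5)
Your construction and most of your verifications are correct and track the paper's argument for its first case ($K=\ov{D}$, $D$ a domain): you compose a Blaschke product with the peak function, and your proofs that $S\in C_b(K\setminus\{x_0\})$, that $0,\tfrac12\in\mathrm{Cl}(S,x_0)$, that $(1-p)S=g\circ p\in P(K)$ (via the disk-algebra function $g(w)=(1-w)B(w)$, a cleaner route than the paper's partial-product argument), and that $L_S=M(x_0)$ (your inclusion $M(x_0)\ss L_S$ via the factorization $1-p^n=(1-p)(1+p+\cdots+p^{n-1})$ and uniform closedness is a nice variant) are all sound. The genuine gap is exactly at the point you flag: the non-zero-divisor property of $(1-p)S$. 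With $S=B\circ p$, the zero set of $(1-p)S$ contains every fibre $p^{-1}(z)$, $z\in Z_B$, and such a fibre can have nonempty interior --- for instance when $p$ is constant on a component of $K^\circ$; note that your parenthetical claim that holomorphy on $K^\circ$ forbids this is false, and this is precisely the phenomenon the paper warns about. Your proposed repair --- ``exploit the freedom in Theorem \ref{asy} to place the zeros of $B$ so as to avoid the countable exceptional set'' --- is not a freedom that Theorem \ref{asy} provides. That theorem only allows you to shrink the pseudohyperbolic radii $\tau_n'$ (subject to $\tau_n'\to 1$) of the disks that \emph{contain} the zeros; the positions of the zeros inside those disks are an output of the construction, not a parameter you control. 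Since the exceptional set can be an arbitrary countable subset of $\D$ accumulating at $1$, no choice of radii alone guarantees avoidance, and to place the zeros themselves off this set you would need an additional perturbation lemma (moving each zero a pseudohyperbolically small amount while preserving the asymptotic interpolation), which you neither state nor prove.

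The paper closes this gap by changing the multiplier rather than controlling the zeros: for general compact $K$ it takes $S=(1+B)\circ p=1+(B\circ p)$, where $B$ comes from Theorem \ref{asy} with the \emph{unimodular} targets $B(p(z_{2n}))\to -1$ and $B(p(z_{2n-1}))\to 1$, so that $\mathrm{Cl}(S,x_0)\supseteq\{0,2\}$. This $S$ is zero-free on $K\setminus\{x_0\}$: one has $|B\circ p|\le 1$, and $B\circ p=-1$ could only occur where $B$ attains the unimodular value $-1$, i.e.\ where $p$ takes a unimodular value different from $1$ --- impossible for a peak function. Hence $(1-p)S$ vanishes only at $x_0$, and since $x_0$ is non-isolated, $K\setminus\{x_0\}$ is dense in $K$, so $(1-p)Sq\equiv 0$ forces $q\equiv 0$; no Baire-category argument and no control of $Z_B$ is needed. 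If you replace your $S=B\circ p$ by $1+(B\circ p)$ with these targets, the rest of your write-up goes through essentially verbatim (with cluster values $0$ and $2$ in place of $0$ and $\tfrac12$).
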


\begin{proof}
{\bf Case 1} We first deal with the case, where $K$ is the closure of  a domain $D$ in $\C^n$
\footnote{ This  is only for the purpose of simplicity, because the tools applied in this case
 are more elementary than in the general case.}.

Let $(z_n)\in K$  be  a sequence of distinct points in $K$
converging to ${x_0}$. 
Then $p(z_n)\to 1$ and $p(z_n)\in \D$.   
By passing to a subsequence, if necessary, we may assume that
$(p(z_n))$ is a (thin) interpolating sequence for $\H(\D)$. 
Using Earl's interpolation
theorem \cite[p. 309]{ga},  there is  an \IBP\ $B$ satisfying  
\begin{equation}\label{earl}
\mbox{$B(p(z_{2n}))=0$ and $B(p(z_{2n+1}))=\delta$}
\end{equation}
 for all $n$ and some constant $\delta>0$ and such that
the zeros of $B$ cluster only at $1$.   Hence $B\circ p$ is discontinuous at ${x_0}$.

%%%

Now let $S:= B\circ p$.  
Since $|p|<1$ everywhere on $K\setminus \{x_0\}$, it follows from the fact that
$B$ is continuous on $\ov\D\setminus \{1\}$ that $S=B\circ p$ is continuous on 
$K\setminus \{x_0\}$.
Moreover, since $x_0$ is not an isolated point, $0\in {\rm Cl}(S, {x_0})$ and 
$\delta \in {\rm Cl}(S, {x_0})$. 

%%%%

It remains to show that  $(1-p)S\in A$ and that $S$ is the multiplier  we are looking for.
Let us point out that for any 
$q\in C(\ov \Omega)$ with $q({x_0})=0$, the function  $qS=q\cdot (B\circ p)$ is 
continuous at ${x_0}$.
We claim that if $q\in A$ and $q(x_0)=0$, 
 then $q(B\circ p)\in A$. 
 
 To this end, consider
the partial products $B_n:=\prod_{j=1}^n L_j$ of the \BP\ $B$. Then   
$B_n$ converges locally uniformly (in $\D$) to $B$. Since $B_n$ is analytic in a neighborhood
of the $P(K)$-spectrum $\sigma(p)$ of $p$, where $\sigma(p)\ss\ov\D$, 
we see that $B_n\circ p\in P(K)\ss A$.  Now
$ q (B_n\circ p)$ converges uniformly in $K$ to $q(B\circ p)$. Hence $q(B\circ p)\in A$.
In particular, $(1-p)(B\circ p)\in A$.

Thus we have shown that 

$$ M(x_0)\ss I_S:=\{f\in A: Sf\in A\}.$$
To show the reverse inclusion, let $f\in I_S$.
 Then the continuity of $f$ and the discontinuity of $S$ at ${x_0}$
 imply that $f({x_0})=0$.  Hence $f\in M(x_0)$ and so $I_S\ss M(x_0)$. 
 Consequently
 $$I_S=\{f\in A: Sf\in A\}= M(x_0).$$
 
 To show that $(1-p)S$ is not a zero-divisor in $A$, we  have to use the special structure of $K$,
 namely that $K=\ov D$ for a domain $D$ in $\C^n$. 
 Note that  for general $K$, $S=B\circ p$  may vanish identically on whole components
 of $K^\circ$ (for example if $B$ has a zero at $p(a)$ and $p\equiv p(a)$ on such a component).
 Now $(1-p)S$ is analytic on $D$; since its zeros are isolated, we deduce that $(1-p)S q\equiv 0$
 implies $q\equiv 0$ on $D$ for every $q\in A$. 
 
  Putting it all together, we have shown that $S$ is  a multiplier for $M(x_0)$.\\
  
  {\bf Case 2} Now let $K\ss\C^n$ be an arbitrary compact set.  To avoid the phenomenon
  described in the last paragraph, we have to look for a multiplier  $S$ that has no zeros
  on $K\setminus \{x_0\}$.  It will have the form
  $$S=(1+B)\circ p= 1+ (B\circ p)$$
  for some Blaschke product $B$ whose zeros cluster only at $1$.  
  Note that $B\circ p$  does never take the value $-1$ on
  $K\setminus \{x_0\}$, since $B(\xi)=-1$ only for  $\xi\in\T\setminus \{1\}$ and
  the only unimodular value $p$ takes, is $1$.  
  
  Here is now the construction of $B$. According to the asymptotic interpolation
  theorem \ref{asy}, there is a Blaschke product $B$  whose zeros cluster only at $1$
  such that
  \begin{equation}\label{dnm}
  \mbox{$B(p(z_{2n}))\to -1$ and $B(p(z_{2n-1}))\to 1$}.
\end{equation}
Hence
$0\in {\rm Cl}(S, {x_0})$ and 
$2\in {\rm Cl}(S, {x_0})$.  The rest is now clear in view of the proof of Case 1,
 always having in mind that $x_0$ is not an isolated point in $K$.
\end{proof}

\begin{theorem}\label{main}\hfill

i) If $\Omega$ is a bounded domain in $\C^n$, then $A(\ov\Omega)$  is not coherent.
\noindent

ii) If $K\subset \C^n$ is compact with $K^\circ\not=\emp$, then $A(K)$ is not coherent.
\end{theorem}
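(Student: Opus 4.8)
The plan is to reduce both assertions to Theorem \ref{coh}: in each case I will exhibit a non-isolated peak-point $x_0$ and a multiplier $S$ for the maximal ideal $M(x_0)$, whereupon non-coherence follows at once. The two parts differ only in how the peak-point and multiplier are produced, and in fact (i) is the case $K=\ov\Omega$ of (ii), so the real content is (ii); I would present (i) first as the clean special case that runs through Case 1 of Lemma \ref{admissible}.

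For (i), set $K=\ov\Omega$. Since $\Omega$ is a domain, every point of $\ov\Omega$ is a limit of points of $\Omega$, so $\ov\Omega$ has no isolated points. By Theorem \ref{thealgebra}(5) the peak-points of $P(\ov\Omega)$ form a nonempty subset of $\partial\ov\Omega$; I fix one such point $x_0$ with peak-function $p\in P(\ov\Omega)$. It is non-isolated, and by the remark following Theorem \ref{thealgebra} it is also a peak-point for $A(\ov\Omega)$. As $K=\ov\Omega$ is the closure of a domain, Lemma \ref{admissible} (Case 1) applies verbatim and hands me a multiplier $S$ for $M(x_0)$ in $A=A(\ov\Omega)$; here the connectedness of $\Omega$ is precisely what forces $(1-p)S$, analytic on $\Omega$ with isolated zeros, to be a non-zero-divisor. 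Theorem \ref{coh} then gives the non-coherence of $A(\ov\Omega)$. For (ii) the interior $K^\circ$ may be irregular, so I first relocate to its closure: applying Theorem \ref{thealgebra}(5) to $P(\ov{K^\circ})$ produces a peak-point $x_0$ for $P(\ov{K^\circ})$ lying in $\partial\ov{K^\circ}\subseteq\partial(K^\circ)\subseteq\partial K$, which is therefore a limit of interior points and in particular non-isolated. By Lemma \ref{pp}, $x_0$ is a peak-point for $A(K)$ with an associated peak-function $g\in A(K)$. I then build the multiplier as in Case 2 of Lemma \ref{admissible}: the asymptotic interpolation Theorem \ref{asy} yields a Blaschke product $B$, clustering only at $1$, with $B(g(z_{2n}))\to-1$ and $B(g(z_{2n-1}))\to 1$ along a sequence $z_n\to x_0$, and I set $S=1+(B\circ g)$. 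Since $B\circ g$ never equals $-1$ on $K\setminus\{x_0\}$, this $S$ is zero-free there, so $(1-g)S$ vanishes only at $x_0$ and is not a zero-divisor in $A(K)$, while $0,2\in\mathrm{Cl}(S,x_0)$ records the essential discontinuity at $x_0$. Theorem \ref{coh} then yields that $A(K)$ is not coherent.

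The point requiring care — and the main obstacle — is that Lemma \ref{admissible} is phrased for a peak-function $p\in P(K)$, whereas in (ii) the function $g$ delivered by Lemma \ref{pp} lies only in $A(K)$ and need not belong to $P(K)$. I must therefore verify that the decisive step in the proof of Lemma \ref{admissible}, namely that $q\,(B\circ g)\in A(K)$ for every $q\in M(x_0)$, survives this weakening. This is exactly where the special structure of $A(K)$ enters: writing $B$ as the locally uniform limit of its partial products $B_m$, which are finite Blaschke products whose poles avoid $\ov\D\supseteq g(K)$, each $B_m\circ g$ is continuous on $K$ and holomorphic on $K^\circ$, hence $B_m\circ g\in A(K)$; then $q\,(B_m\circ g)\to q\,(B\circ g)$ uniformly on $K$ and uniform closedness of $A(K)$ forces the limit into $A(K)$. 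Thus the multiplier is legitimately constructed inside $A(K)$ despite $g\notin P(K)$, and the argument closes.
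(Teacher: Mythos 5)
Your proposal is correct and follows essentially the same route as the paper: Theorem \ref{thealgebra} (together with Lemma \ref{pp} in case (ii)) supplies the non-isolated peak-point, Lemma \ref{admissible} supplies the multiplier, and Theorem \ref{coh} concludes. You actually go beyond the paper's terse ``similar as i)'' for part (ii) by spotting and repairing the one real subtlety there---the peak-function $g$ produced by Lemma \ref{pp} lies only in $A(K)$, not in $P(K)$, so the key step $B_m\circ g\in A(K)$ must be justified by composition with a function holomorphic near $\ov{\D}$ and uniform closedness, rather than by the $P(K)$-functional-calculus argument used in Lemma \ref{admissible}---and your fix, together with the choice of the zero-free multiplier $S=1+(B\circ g)$ from Case 2, is exactly right.
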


\begin{proof}
i)  By Theorem \ref{thealgebra},  there exists a peak-point $z_0\in\partial\ov\Omega$
for $A(\ov\Omega)$ and  $M(z_0)$ has an approximate identity.  Of course
 $\ov\Omega$ is a compact set without isolated points.
Hence, by Lemma \ref{admissible},
there is a multiplier $S$ for $M(z_0)$. The noncoherence of   $A(\ov\Omega)$
 now follows from Theorem \ref{coh}.
 
 ii) Similar as i); just  use Lemma \ref{pp} to get the non-isolated peak-point $x_0$ for $A(K)$.
\end{proof}

If $K^\circ=\emp$, then $A(K)=C(K)$. In Section \ref{noncoho} we will give a characterization
of those  compacta in $\C^n$ for which $C(K)$ is coherent. Let us also note that  i) is not a
 special case of ii), because there are algebras of the form $A(\ov \Omega)$ that 
 do not belong to the class of algebras of type $A(K)$:  just take as $\Omega$  the unit disk
 deleted by a Cantor set (=compact and totally disconnected) of positive planar Lebesgue measure.

\begin{definition}
A compact set $K\ss\C^n$ is called {\em admissible} if its polynomial convex hull $\widehat K$
does not contain any isolated points.
\end{definition}
Our final theorem contains (more or less) all the preceding ones as a special case.

\begin{theorem}
Let $K\ss\C^n$ be an admissible compact set and let $A$ be a uniformly closed subalgebra of $C(K)$
with $P(K)\ss A\ss C(K)$. Then $A$ is not coherent.
\end{theorem}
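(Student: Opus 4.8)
The plan is to reduce everything to the machinery already assembled: Theorem \ref{coh} says that a non-isolated peak-point together with a multiplier forces noncoherence, and Lemma \ref{admissible} manufactures the multiplier out of a peak-function. So it suffices to exhibit a single peak-point $x_0$ for $P(K)$ that is \emph{non-isolated in $K$}. Indeed, once such an $x_0$ is found, the remark following Theorem \ref{thealgebra} shows that $x_0$ is also a peak-point for the larger algebra $A$; since $\partial P(K)\ss\partial K$, Lemma \ref{admissible} then supplies a multiplier $S\in C_b(K\setminus\{x_0\})$ for the maximal ideal $M(x_0)=\{f\in A:f(x_0)=0\}$; and Theorem \ref{coh} immediately yields that $A$ is not coherent. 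Thus the entire content is to extract a non-isolated peak-point from the admissibility hypothesis.

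First I would recall from Theorem \ref{thealgebra} that the spectrum of $P(K)$ is the polynomial hull $\widehat K$, that its Shilov boundary $\partial P(K)$ is a non-void closed subset of $\partial K\ss K$, and that the peak-points of $P(K)$ are dense in $\partial P(K)$. Since admissibility means $\widehat K$ has no isolated points, the crux is the following.

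\textbf{Claim:} $\partial P(K)$ has no isolated points. I would prove the contrapositive, namely that an isolated point $x_0$ of $\partial P(K)$ is isolated in $\widehat K$. Pick a peak-function $f\in P(K)$ at $x_0$. Because $\partial P(K)\setminus\{x_0\}$ is compact with $|f|<1$ on it, we get a strict gap $c:=\sup_{\partial P(K)\setminus\{x_0\}}|f|<1$, so $f^k\to\mathbf 1_{\{x_0\}}$ \emph{uniformly} on $\partial P(K)$. As the sup-norm over $\widehat K$ coincides with the sup-norm over the Shilov boundary, $(f^k)$ is Cauchy in $P(K)$ and converges to some $e\in P(K)$. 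Passing to the limit in $f^{k+1}=f\cdot f^k$ and $f^{2k}=(f^k)^2$ shows that $e$ is an idempotent equal to $\mathbf 1_{\{x_0\}}$ on $\partial P(K)$; hence $e=\mathbf 1_U$ for the clopen set $U:=\{e=1\}$, which contains $x_0$ and satisfies $U\cap\partial P(K)=\{x_0\}$. For every $g\in P(K)$ one then has $\|ge\|_{\widehat K}=\|ge\|_{\partial P(K)}=|g(x_0)|$, so $|g(z)|\le|g(x_0)|$ for all $z\in U$; replacing $g$ by $g-\lambda$ and letting $\lambda$ range over $\C$, together with the fact that $P(K)$ separates the points of $\widehat K$, forces $g(z)=g(x_0)$ for every $z\in U$ and hence $U=\{x_0\}$. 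Thus $x_0$ is isolated in $\widehat K$, contradicting admissibility and proving the Claim.

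With the Claim in hand, $\partial P(K)$ is a non-void perfect subset of $K$, so choosing any peak-point $x_0\in\partial P(K)$ (these exist by Theorem \ref{thealgebra}), there is a sequence of distinct points of $\partial P(K)\ss K$ tending to $x_0$, whence $x_0$ is non-isolated in $K$, and $x_0\in\partial K$. Feeding this $x_0$ into Lemma \ref{admissible} and then into Theorem \ref{coh} finishes the proof. The main obstacle is precisely the Claim: upgrading ``$\widehat K$ has no isolated points'' to ``$\partial P(K)$ has no isolated points''. The heart of the matter is the norm-convergence $f^k\to e$, which relies on the strict gap $c<1$ that isolation provides, and the resulting Shilov idempotent; everything else is bookkeeping with results already established in the paper.
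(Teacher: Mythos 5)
Your proposal is correct, and its outer skeleton necessarily coincides with the paper's proof, which simply cites Theorem \ref{thealgebra}(5) for the peak-point, Lemma \ref{admissible} for the multiplier, and Theorem \ref{coh} for the conclusion. What you add, however, is substantive: the paper's two-line argument never explains how admissibility (no isolated points in $\widehat K$) is converted into what Lemma \ref{admissible} and Theorem \ref{coh} actually require, namely a peak-point that is non-isolated \emph{in $K$}. This is a genuine step, since an admissible $K$ may itself have isolated points --- e.g.\ $K=\T\union\{0\}$ has polynomial hull $\ov\D$ --- so one must know that peak-points avoid them. Your Claim, that $\partial P(K)$ is perfect whenever $\widehat K$ has no isolated points, is exactly this bridge, and your proof of it is sound: isolation of $x_0$ in $\partial P(K)$ gives the strict gap $c<1$, hence norm-convergence of $f^k$ to an idempotent $e\in P(K)$ whose Gelfand transform is $\mathbf 1_U$ for a clopen $U\ss\widehat K$ with $x_0\in U$ and $U\inter\partial P(K)=\{x_0\}$; the Shilov-boundary estimate $\|ge\|_{\widehat K}=\|ge\|_{\partial P(K)}=|g(x_0)|$, the $g-\lambda$ trick, and point separation on the spectrum $\widehat K$ then collapse $U$ to $\{x_0\}$, making $x_0$ isolated in $\widehat K$ and contradicting admissibility. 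One small step you should make explicit: you ``pick a peak-function at $x_0$'' for an isolated point $x_0$ of $\partial P(K)$, and the existence of such a function is not an assumption but a consequence of Theorem \ref{thealgebra}(5) --- peak-points are dense in $\partial P(K)$, and a dense subset must contain every isolated point. With that remark inserted, your argument is complete, and it documents the role of the admissibility hypothesis more carefully than the paper's own proof does.
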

\begin{proof}
Similar as the proof above; note that Theorem \ref{thealgebra} (5) yields the desired
peak-point for $P(K)$ and Lemma \ref{admissible} the associated multiplier.
\end{proof}
%%%

If we are considering algebras of a single complex variable, then we have the following 
refinement:
\begin{theorem}
Let $K\ss\C$ be an infinite  compact set and let $A$ be a uniformly closed subalgebra of $C(K)$
with $P(K)\ss A\ss C(K)$. Then $A$ is not coherent.
\end{theorem}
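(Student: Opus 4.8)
The plan is to reduce the whole statement to the existence of a single non-isolated peak point for $P(K)$. Indeed, by the remark following Theorem~\ref{thealgebra}, any peak point for $P(K)$ lying in $\partial K$ is automatically a peak point for $A$; once we have a non-isolated such point $x_0$, Lemma~\ref{admissible} (Case~2) manufactures a multiplier $S$ for the maximal ideal $M(x_0)$ of $A$, and Theorem~\ref{coh} then yields the noncoherence of $A$. So everything comes down to the purely function-theoretic claim that \emph{every infinite compact set $K\ss\C$ carries a non-isolated point that is a peak point for $P(K)$}. Here the passage to a single variable is decisive, since it puts Mergelyan's and Lavrentiev's approximation theorems at our disposal.

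First I would split on the polynomial convex hull, recalling that $P(K)=P(\widehat{K})$, that the spectrum of $P(K)$ is $\widehat{K}$, and that its Shilov boundary is the outer boundary $\Gamma:=\partial\widehat{K}$, which is contained in $K$. If $\widehat{K}$ has empty interior, then $\C\setminus K$ has no bounded components, so $\widehat{K}=K$, the set $K$ has empty interior and connected complement, and Lavrentiev's theorem gives $P(K)=C(K)$. As $K$ is infinite and compact it has an accumulation point $x_0$, at which one peaks trivially inside $C(K)=P(K)$; this furnishes the desired non-isolated peak point and disposes of the thin case.

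The substantive case is $\widehat{K}^\circ\ne\emp$. Then $\C\setminus\Gamma$ contains the two disjoint nonempty open sets $\widehat{K}^\circ$ and $U_\infty:=\C\setminus\widehat{K}$, so it is disconnected; since a compact totally disconnected subset of the plane cannot separate it, $\Gamma$ must contain a nondegenerate continuum $\Gamma_0\ss K$, and every point of $\Gamma_0$ is non-isolated in $K$ and lies in $\partial K$. It remains to place a peak point of $P(K)$ inside $\Gamma_0$, and for this I would use an osculating-disk construction relative to the full hull: choosing $c\in U_\infty$ and a nearest point $x_0$ of $\widehat{K}$ to $c$, one gets $\overline{D}(c,r)\cap\widehat{K}=\{x_0\}$ with $r=|c-x_0|$, so that $h(z)=r/(z-c)$ maps $\widehat{K}$ into $\overline{\D}$ with modulus $1$ only at $x_0$ and has its single pole in $U_\infty$; by Runge's theorem $h$ is a uniform limit of polynomials on $\widehat{K}$, whence $p=\tfrac12\bigl(1+\overline{h(x_0)}\,h(z)\bigr)\in P(K)$ peaks at $x_0$. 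The essential point is that peaking against $\widehat{K}$ automatically dominates all of $K$, in particular every isolated point.

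The main obstacle — the delicate heart of the argument — is to guarantee that the touching point $x_0$ can be taken in the continuum $\Gamma_0$ rather than at one of the (possibly infinitely many) isolated points of $K$, which may cluster onto $\Gamma_0$ from the side of $U_\infty$ and so capture every nearest point. I expect to resolve this in one of two ways. Either, staying elementary, I approach a point of $\Gamma_0$ along a direction free of clustering islands and combine the external disk with a Mergelyan extension that reassigns the peak function on the isolated points by matching their nearby boundary values (legitimate because $\C\setminus\widehat{K}$ is connected, so $P(\widehat{K})=A(\widehat{K})$). Or, invoking a little potential theory, I use that regular boundary points of $U_\infty$ are peak points for $P(K)$ while the irregular ones form a polar set (Kellogg's theorem); since a nondegenerate continuum has positive logarithmic capacity, $\Gamma_0$ cannot be polar and therefore contains a regular — hence peak — point, necessarily non-isolated. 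Either route completes the reduction, and with it the proof.
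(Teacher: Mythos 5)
Your overall reduction is exactly the paper's: produce a non-isolated peak point $x_0\in\partial K$ for $P(K)$, then Lemma~\ref{admissible} gives a multiplier for $M(x_0)$ and Theorem~\ref{coh} gives noncoherence. Your thin case is also correct: if $\widehat K$ has empty interior then $\widehat K=K$ has empty interior and connected complement, Lavrentiev gives $P(K)=C(K)$, and any accumulation point of $K$ peaks. The gap is where you yourself locate it, in the case $\widehat K^\circ\neq\emp$, and neither of your repairs closes it. The osculating-disk construction can indeed be defeated: take $K=\partial\D\cup E$, where $E$ consists, for each $k$, of a finite angular net of points at distance $2^{-k}$ from the circle with spacing $2^{-k}/100$; then $E$ is discrete off the circle (so every point of $E$ is isolated in $K$), yet every $c\notin\widehat K$ is strictly closer to $E$ than to $\partial\D$, so \emph{every} nearest-point lands on an isolated point. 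Repair (A) is not an argument: the existence of a ``direction free of clustering islands'' is unsubstantiated, and the Mergelyan reassignment is unspecified. Repair (B) hinges on the claim that a regular boundary point of $U_\infty=\C\setminus\widehat K$ is a peak point for $P(K)$, which you quote as if standard; it is not. Regularity of $x_0$ for the Dirichlet problem on $U_\infty$ is a thinness condition on $\widehat K$ at $x_0$ (the complement of the \emph{domain}), whereas the classical peak-point criteria for $R(\widehat K)$ (Melnikov, Gonchar, Curtis) are capacity conditions on $\C\setminus\widehat K$ near $x_0$ --- complementary sets, with no automatic implication between the two conditions. So as written the proof is incomplete precisely at its ``delicate heart''.

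What closes the gap --- and is the paper's actual proof --- is the observation that no selection of a special point is needed at all: since $\C\setminus\widehat K$ is connected and unbounded, it clusters at every point of $\partial\widehat K$, and a connected set joining the inner and outer circles of an annulus has analytic capacity comparable to the annulus radius; hence by Gonchar's peak-point criterion \cite[Corollary 4.4, p.~205]{gam}, \emph{every} point of $\partial\widehat K$ is a peak point for $R(\widehat K)$, which equals $P(\widehat K)=P(K)$ by Mergelyan (connectedness of the complement again). Since $K$ is infinite, $\partial\widehat K$ is an infinite compact set, so it has an accumulation point $x_0$; this $x_0$ is then automatically a non-isolated (in $K$) peak point for $P(K)$, and the standard machinery finishes. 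Note that your repair (B)'s claim is in fact true, but only as a corollary of this stronger statement --- which you would have to prove anyway, and which, once available, makes your entire Case-2 apparatus (the continuum $\Gamma_0$, the osculating disks, Kellogg's theorem and non-polarity of continua) as well as the case split itself unnecessary.
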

\begin{proof}
The infinity of $K$ implies that the polynomial convex hull $\widehat K$ of $K$ is an infinite
compact set, too. This in turn implies that its topological boundary $\partial\widehat K$
is an infinite compact set. Hence, there exists a non-isolated point $x_0\in \partial\widehat K\ss K$.
Now by Mergelyan's Theorem $P(\widehat K)=R(\widehat K)=A(\widehat K)$. 
Using the fact that $\C\setminus \widehat K$
is connected, Gonchar's peak-point criterium for $R(K)$ (see \cite[Corollary 4.4, p. 205]{gam}) 
shows that $x_0$ is a peak-point for $R(\widehat K)=P(\widehat K)$. The non-coherence now follows 
as in the preceding theorems.
\end{proof}

We guess that this result can be extended to the case of several variables.

 \section{Noncoherence of $C(K)$}\label{noncoho}
 
Let $K$ be a compact set in $\C^n$.  A general result in \cite{nev}
  tells us that for completely regular spaces $X$, $C(X,\R)$ is coherent if and only if
 $X$ is basically disconnected \footnote{ Recall that $X$ is said to be basically disconnected if
 the closure  of $\{x\in X: f(x)\not=0\}$ is open for every $f\in C(X,\R)$.}.
  This result can be used  to conclude that
 $C(K,\C)$ is coherent if and only if $K$ is finite.  
 Since our compacta $K$ are metrizable, we would like to present, for the reader's convenience,  
 the following independent easy proof.

 \begin{theorem}
 If $K\ss\C^n$ is compact, then $C(K)$ is coherent if and only if $K$ is finite.
\end{theorem}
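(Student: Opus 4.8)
The plan is to prove both implications, the substantive one being that coherence of $C(K)$ forces $K$ to be finite; equivalently, I will show that if $K$ is infinite then $C(K)$ is not coherent. The reverse implication is essentially algebraic: if $K=\{x_1,\dots,x_m\}$ is finite, then evaluation at the points gives a ring isomorphism $C(K)\cong\C^m$, a finite product of fields. Every ideal of $\C^m$ is generated by an idempotent and there are only finitely many of them, so $\C^m$ is Noetherian (indeed Artinian); in particular the intersection of any two finitely generated ideals is finitely generated, and $C(K)$ is coherent.

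For the converse I would invoke our noncoherence criterion, Theorem \ref{coh}. First observe that $C(K)=C_b(K,\C)$ is a function algebra on the compact metric space $K$: it is uniformly closed, contains the constants, and separates points. Since $K$ is infinite and compact it has an accumulation point $x_0\in K$, which is therefore non-isolated. Moreover every point of $K$ is a peak-point for $C(K)$: the function $q(x):=e^{-\dist(x,x_0)}$ lies in $C(K)$, satisfies $q(x_0)=1$, and $0<q(x)<1$ for $x\neq x_0$, so $|q|<1$ off $x_0$. Thus the only thing left to produce is a multiplier $S$ for the maximal ideal $M(x_0)=\{f\in C(K):f(x_0)=0\}$; Theorem \ref{coh} then yields the noncoherence of $C(K)$. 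Because we have all of $C(K)$ at our disposal, I would build $S$ by hand rather than through Lemma \ref{admissible}.

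The heart of the argument is this explicit construction. Choose distinct points $z_n\to x_0$ with $z_n\neq x_0$; since their only accumulation point is $x_0$, one can pick pairwise disjoint open balls $B(z_n,r_n)$ with $x_0\notin\ov{B(z_n,r_n)}$ and $r_n\to0$, and by Urysohn's lemma bumps $\phi_n\in C(K)$ with $0\le\phi_n\le1$, $\phi_n(z_n)=1$, and $\supp\phi_n\ss B(z_n,r_n)$. I then set
$$S:=1+\sum_{m\text{ odd}}\phi_m,$$
which is continuous on $K\setminus\{x_0\}$ (near any point $\neq x_0$ only finitely many bumps are nonzero), satisfies $1\le S\le2$, and oscillates at $x_0$ since $S(z_{2n})=1$ while $S(z_{2n-1})=2$; hence $\{1,2\}\ss\mathrm{Cl}(S,x_0)$ and $S$ has no continuous extension to $x_0$. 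I claim $S$ is a multiplier. First, $L_S=\{f\in C(K):Sf\in C(K)\}=M(x_0)$: if $f(x_0)=0$, boundedness of $S$ gives $Sf\to0$ at $x_0$, so $Sf$ extends continuously and lies in $C(K)$; conversely, if $f(x_0)\neq0$ then $f$ is continuous and nonzero near $x_0$, so continuity of $Sf$ there would force $S=(Sf)/f$ to extend continuously to $x_0$, contradicting the oscillation. Second, with $p(x):=\dist(x,x_0)\in M(x_0)$, the product $pS$ vanishes only at $x_0$ (since $p$ vanishes only there and $S\ge1$); as $x_0$ is non-isolated, $K\setminus\{x_0\}$ is dense, so any $h$ with $(pS)h=0$ vanishes on a dense set and is therefore $0$. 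Thus $pS$ is not a zero-divisor, $S$ is a multiplier for $M(x_0)$, and Theorem \ref{coh} applies.

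The step I expect to be the main obstacle is reconciling the two competing demands on $S$: to obtain $L_S=M(x_0)$ I need $S$ bounded yet genuinely discontinuous at $x_0$, while to obtain a non-zero-divisor $pS$ I need $S$ to avoid vanishing on any open set. A naive interpolating bump taking the values $0$ and $1$ would vanish on an open set and make $pS$ a zero-divisor; the device of oscillating between two strictly positive values $1$ and $2$ is precisely what resolves the tension, keeping $S$ bounded, bounded away from $0$, and discontinuous at $x_0$ simultaneously.
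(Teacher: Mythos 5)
Your proof is correct and takes essentially the same approach as the paper: both dispose of the finite case trivially and, for infinite $K$, apply Theorem \ref{coh} at an accumulation point $x_0$ by explicitly constructing a bounded, zero-free function $S$ on $K\setminus\{x_0\}$ that oscillates between two values along a sequence tending to $x_0$, then verifying the multiplier property exactly as you do. The only difference is cosmetic: the paper obtains $S$ as $B\circ p$, composing a peak function $p$ with a continuous zero-free $B:[0,1[\,\to\,]0,1]$ interpolating oscillating values, whereas you build $S$ from a sum of disjointly supported bump functions.
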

\begin{proof}
Suppose that $K$ is not finite. Then there is ${x_0}\in K$ such that $\lim x_n={x_0}$
for some sequence $(x_n)$ of distinct points in $K$. 
Let  $E$ be a closed subset of $K$ not containing ${x_0}$.
Then 
$$p(x)=  \frac{d(x,E)}{d(x,E)+d(x,{x_0})}$$
is a peak-function for $x_0$.   By passing to a subsequence, if necessary, we may assume
that $p(x_n)\not=p(x_m)$ for $n\not=m$.
Choose a continuous zero-free function $B:[0,1[\to \;]0,1]$  such that 
$$\mbox{$B(p(x_{2n}))=1$ and $B(p(x_{2n-1}))=1/n\to 0$,}$$
and let 
$$S:= B\circ p.$$
Then $S\in C_b(K\setminus\{x_0\})$.
It is now straightforward to check that the continuous function
 $(1-p)S$ is not a zero-divisor and that $S$ is a multiplier
for $M(x_0)$
(note that the cluster set of $S$ at $x_0$ is not a singleton and contains $0$).
Then we apply Theorem \ref{coh}.

If, on the other hand, $X$ is finite, then $C(X)$ is a principal ideal ring. In fact,
 if $I\ss C(X)$ is an ideal, then we define a generator $g$ of $I$
 by  $g(x)=1$ if $x\notin Z(I)$ and $g(x)=0$ if $x\in Z(I)$.
 Hence $C(X)$ is trivially coherent. 
\end{proof}

\subsection*{Acknowledgements}

 I thank Amol Sasane for many discussions  on noncoherence and 
 Peter Pflug for some valuable comments concerning extensions
 of bounded holomorphic functions in several variables, and for providing reference \cite{JP}.

\end{document}